\newtheorem{definition}{Definition}
\newtheorem{lemma}{Lemma}
\newtheorem{theorem}{Theorem}
\newtheorem{corollary}{Corollary}
\title{
Generalized Fibonacci sequences and 
their properties}
\author[1]{Martin Bunder}
\author[2]{Joseph Tonien}
\affil[1]{School of Mathematics and Applied Statistics,
University of Wollongong,
Australia}
\affil[2]{School of Computing and Information Technology,
University of Wollongong,
Australia}
\begin{document}

\maketitle

\begin{abstract}
Let $F_n(k)$ be the generalized Fibonacci number defined by (with $F_i(k)$ abbreviated to $F_i$):
$F_n = F_{n-1} + F_{n-2} + \dots + F_{n-k}$, for $n \geq k$, and the initial values $(F_0,F_1,...,F_{k-1})$. Let $B_n(k,j)$ be $F_n(k)$ with initial values given by $F_j = 1$ and, for $i<j$ and $j<i<k$, $F_i = 0$. This paper shows that any $F_n(k)$ can be expressed as the sum of $B_n(k,j)$s. This paper also  expresses
$B_n(k,j)$ and $F_n(k)$ as finite sums, derives some properties and evaluates their 2-adic order
for a range of values of $k, j$ and $n$ and those of $B_n(3,j)$ and $B_n(4,j)$  for most values of $j$ and $n$.
\end{abstract}

\section{Introduction}

In this paper, for a fixed integer $k \geq 2$, we consider the generalized Fibonacci sequences $\{F_n(k)\}_{n \geq 0}$ given by the $k$-order Fibonacci recurrence equation
with $F_i(k)$ abbreviated to $F_i$:
\begin{equation}
    F_n = F_{n-1} + F_{n-2} + \dots + F_{n-k}, \mbox{ for all } n \geq k.
\end{equation}
Each of these $k$-order Fibonacci sequences is completely determined by the values of the first $k$ terms $(F_0, F_1, \dots, F_{k-1})$ of the sequence. The set of generalized Fibonacci sequences forms a $k$-dimensional vector space over $\mathbb{C}$.

 Lengyel and Marques in \cite{Lengyel_Marques_2014} and \cite{Lengyel_Marques}, Sobolewski in \cite{Sobolewski}, Young in \cite{Young} and Bunder and Tonien in \cite{Bunder2020} have considered the generalized Fibonacci sequence $\{T_n(k)\}_{n\geq 0}$ determined by the first $k$ terms $(0, 1, 1, \dots, 1)$ and evaluated the 2-adic order of its elements for many values of $k$.

Lengyel and Marques in~\cite{Lengyel_Marques} also mentioned the generalized Fibonacci sequence, denoted by $\{B_n(k)\}_{n \geq 0}$, defined by the first $k$ terms $(0,0, \dots, 0, 1)$. In this paper, we will consider a more general version of this sequence, which we denote as $\{B_n(k,j)\}_{n \geq 0}$. This sequence is parameterized by $k$ and $j$, where $0 \leq j \leq k-1$. The sequence is determined by the first $k$ terms $(0, \dots, 0,1,0, \dots, 0)$ where the only non-zero element in the first $k$ terms is $B_j(k,j)=1$. These sequences $\{B_n(k,j)\}_{n \geq 0}$ form the standard basis 
for the $k$-dimensional vector space of all $k$-order Fibonacci sequences, and thus, we have the following theorem.

\begin{theorem}
\label{Bkj:theorem0}
Let $k \geq 2$.
For any generalized Fibonacci sequence $\{F_n(k)\}_{n \geq 0}$
of order $k$, we have
$$
F_n(k) = \sum_{j=0}^{k-1} {F_j(k) B_n(k,j)}.
$$
\end{theorem}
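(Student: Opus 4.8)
The plan is to exploit the elementary fact that any sequence satisfying the $k$-order Fibonacci recurrence is completely determined by its first $k$ terms, together with the linearity of that recurrence. Concretely, set
$$
G_n \;=\; \sum_{j=0}^{k-1} F_j(k)\,B_n(k,j),
$$
and aim to show $G_n = F_n(k)$ for all $n \geq 0$. First I would note that each $\{B_n(k,j)\}_{n\geq 0}$ is by definition a $k$-order Fibonacci sequence, i.e. $B_n(k,j) = B_{n-1}(k,j) + \dots + B_{n-k}(k,j)$ for $n \geq k$; since this recurrence is linear and homogeneous, the linear combination $\{G_n\}_{n\geq 0}$ inherits it, so $\{G_n\}$ is itself a $k$-order Fibonacci sequence.

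Next I would pin down the first $k$ terms of $\{G_n\}$. By the definition of $B_n(k,j)$, for an index $n$ in the initial block $0 \leq n \leq k-1$ we have $B_n(k,j) = 1$ when $n = j$ and $B_n(k,j) = 0$ otherwise; that is, $B_n(k,j) = \delta_{n,j}$ there. Hence for $0 \leq n \leq k-1$,
$$
G_n \;=\; \sum_{j=0}^{k-1} F_j(k)\,\delta_{n,j} \;=\; F_n(k).
$$
So $\{G_n\}$ and $\{F_n(k)\}$ are two $k$-order Fibonacci sequences that agree on $n = 0, 1, \dots, k-1$.

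Finally I would close the argument by a routine induction on $n \geq k$: assuming $G_m = F_m(k)$ for all $m < n$, the recurrence gives $G_n = G_{n-1} + \dots + G_{n-k} = F_{n-1}(k) + \dots + F_{n-k}(k) = F_n(k)$. Therefore $G_n = F_n(k)$ for every $n \geq 0$, which is the assertion of the theorem. (Equivalently, one may phrase the whole proof in a single sentence: both sides are linear functions of the initial data $(F_0(k), \dots, F_{k-1}(k))$ and they coincide when that data ranges over the standard basis vectors of $\mathbb{C}^k$, so they coincide for arbitrary initial data.)

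I do not expect a genuine obstacle here; the only points requiring care are verifying that the stated initial conditions for $B_n(k,j)$ really do make the initial block the Kronecker delta (equivalently, that the sequences $\{B_n(k,j)\}_j$ form the standard basis of the space of initial conditions, as already noted in the text preceding the theorem), and checking that the range ``for $n \geq k$'' in the recurrence matches the base case of the induction so that no initial term is accidentally constrained by the recurrence.
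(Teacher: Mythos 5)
Your proof is correct and follows essentially the same route as the paper, which justifies the theorem by observing that the sequences $\{B_n(k,j)\}_{n\geq 0}$ form the standard basis of the $k$-dimensional vector space of $k$-order Fibonacci sequences; you simply spell out the linearity, the Kronecker-delta initial block, and the induction that the paper leaves implicit.
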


We also consider the generalized Fibonacci sequence,  denoted by $\{S_n(k)\}_{n \geq 0}$, defined by the first $k$ terms $(1,1, \dots, 1)$. We will show some relationship between this sequence $\{S_n(k)\}_{n \geq 0}$ and the sequences $\{B_n(k,j)\}_{n \geq 0}$. We also determine the 2-adic order of $B_n(k,j)$ for many values of $k, j$ and $n$.

In our calculation of $v_2(B_n(k,j))$,
we will make use of the
binary digit sum function $s_2(n)$ which is defined as follows.

\begin{definition}
\label{Bkj:def1}
Define $s_2(0) = 0$.
If $i \geq 1$
and $n_1 > n_2 > \dots > n_i \geq 0$ then
$$
s_2(2^{n_1} + 2^{n_2} + \dots + 2^{n_i})=i.
$$
\end{definition}

We have the following lemma from~\cite{Bunder2020}.

\begin{lemma}
\label{Bkj:lemma1}

(i) $v_2(n!) = n - s_2(n)$.

(ii) $v_2({m \choose n})
= s_2(n) + s_2(m-n) - s_2(m)$.

(iii)
If $1 \leq n < 2^{v_2(m)}$ then
$s_2(m-n) = s_2(m) + v_2(m) - s_2(n) - v_2(n)$.

(iv)
For $n>0$,
$s_2(n-1) = s_2(n) + v_2(n) -1$.

(v)
For $n > m$,
$s_2(n) - s_2(m) = n-m - \sum_{i=m+1}^{n} v_2(i)$.
\end{lemma}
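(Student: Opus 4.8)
The plan is to treat (i) and (ii) as consequences of Legendre's formula, and to treat (iv) as the fundamental digit-level identity from which both (v) and (iii) then follow. So I would prove them in the order (i), (ii), (iv), (v), (iii), recycling each part as soon as it is available.

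For (i), I would start from Legendre's formula $v_2(n!) = \sum_{i \geq 1} \lfloor n/2^i \rfloor$ and observe that if $n = \sum_j 2^{a_j}$ is the binary expansion, then $\lfloor n/2^i \rfloor = \sum_{a_j \geq i} 2^{a_j - i}$; summing over $i$ and using the geometric-series identity $2^a - 1 = \sum_{i=1}^{a} 2^{a-i}$ gives $v_2(n!) = \sum_j (2^{a_j} - 1) = n - s_2(n)$. Part (ii) is then immediate: writing ${m \choose n} = m!/(n!\,(m-n)!)$ and applying (i) three times, the linear terms $m$, $n$, $m-n$ cancel and one is left with $v_2({m \choose n}) = s_2(n) + s_2(m-n) - s_2(m)$.

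For (iv), fix $n > 0$ and set $v = v_2(n)$, so that in binary $n$ ends in a $1$ in position $v$ followed by $v$ zeros. Subtracting $1$ flips that $1$ to a $0$ and turns the $v$ trailing zeros into $v$ ones, leaving every higher bit unchanged; hence $s_2(n-1) = s_2(n) - 1 + v$, which is (iv). Part (v) follows by telescoping: summing the rearrangement $s_2(i) - s_2(i-1) = 1 - v_2(i)$ of (iv) over $i = m+1, \dots, n$ yields $s_2(n) - s_2(m) = (n-m) - \sum_{i=m+1}^{n} v_2(i)$.

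The main work is part (iii), which is where I expect the only real subtlety, since the hypothesis $1 \le n < 2^{v_2(m)}$ is precisely what forces $n$ to occupy only the bit positions below the lowest set bit of $m$. I would first establish the complementation identity $s_2(2^v - n) = v - s_2(n-1)$ for $1 \le n \le 2^v - 1$, by writing $2^v - n = (2^v - 1) - (n-1)$ and noting that subtracting $n-1$ from the all-ones string $\underbrace{1\cdots1}_{v}$ merely complements its $v$ low bits. Writing $m = 2^{v} a$ with $a$ odd and $v = v_2(m)$, I would then split $m - n = 2^{v+1}\cdot\tfrac{a-1}{2} + (2^v - n)$; because the two summands occupy disjoint bit ranges their digit sums add, giving $s_2(m-n) = s_2\!\big(\tfrac{a-1}{2}\big) + s_2(2^v - n)$. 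Finally I would simplify $s_2\!\big(\tfrac{a-1}{2}\big) = s_2(a-1) = s_2(a) - 1 = s_2(m) - 1$ (using $a$ odd together with (iv)) and substitute the complementation identity and (iv)-expansion of $s_2(n-1)$; the copies of $v$ and the constants then combine to produce $s_2(m-n) = s_2(m) + v_2(m) - s_2(n) - v_2(n)$. The careful bookkeeping of the borrow and of which bits survive the subtraction is the delicate point; everything else is routine.
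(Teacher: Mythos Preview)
Your proof is correct in every part. However, there is no proof in this paper to compare it against: the authors state the lemma as a quotation from their earlier paper~\cite{Bunder2020} and give no argument here. So your write-up supplies something the present paper omits entirely.

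A brief remark on your argument for (iii), which is the only place with any content beyond bookkeeping. Your decomposition $m-n = 2^{v+1}\cdot\frac{a-1}{2} + (2^v - n)$ is clean and the disjoint-bit-range claim is justified because $1 \le n < 2^v$ forces $0 < 2^v - n \le 2^v - 1$, so the second summand lives strictly below bit~$v$ while the first lives at or above bit~$v+1$. The complementation identity $s_2(2^v - n) = v - s_2(n-1)$ and the chain $s_2\!\big(\tfrac{a-1}{2}\big) = s_2(a-1) = s_2(a) - 1 = s_2(m) - 1$ (the middle equality using (iv) with $v_2(a)=0$) are both sound, and the final substitution of (iv) for $s_2(n-1)$ collapses everything to the stated formula. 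The edge case $a=1$ (i.e.\ $m=2^v$) also goes through, since then $\tfrac{a-1}{2}=0$ and $s_2(0)=0=s_2(m)-1$. Nothing is missing.
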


\section{Formulas for $S_n(k)$, $B_n(k,j)$ and $F_n(k)$}

We now derive a very useful relation for all  $k$-order Fibonacci sequences.

\begin{lemma}
\label{Bkj:lemma2}
Let $k \geq 2$.
For any generalized Fibonacci sequence $\{F_n(k)\}_{n \geq 0}$ of order $k$, we have 

(i) If $n \geq k+1$, $F_n = 2 F_{n-1} - F_{n-k-1}$

(ii) If $n \geq m \geq k+1$, 
$$F_n = 2^{n-m+1} F_{m-1} -\sum_{i=m-k-1}^{n-k-1}{2^{n-k-1-i} F_{i}}.$$
\end{lemma}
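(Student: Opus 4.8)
The plan is to prove part (i) directly from the defining recurrence, and then obtain part (ii) by iterating part (i), either by a clean telescoping/summation argument or by a straightforward induction on $n$ with $m$ fixed.

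For part (i), I would start from the $k$-order recurrence written at index $n$ and at index $n-1$. That is, for $n \geq k+1$ (so that $n-1 \geq k$ and the recurrence is valid at $n-1$ as well),
$$
F_n = F_{n-1} + F_{n-2} + \dots + F_{n-k}, \qquad
F_{n-1} = F_{n-2} + F_{n-3} + \dots + F_{n-1-k}.
$$
Subtracting the second from the first makes the overlapping block $F_{n-2} + \dots + F_{n-k}$ cancel, leaving $F_n - F_{n-1} = F_{n-1} - F_{n-1-k}$, i.e. $F_n = 2F_{n-1} - F_{n-k-1}$. This is the only real computational content, and it is genuinely routine; the only thing to be careful about is the index bound $n \geq k+1$, which is exactly what is needed for both invocations of the recurrence.

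For part (ii), I would fix $m \geq k+1$ and induct on $n \geq m$. The base case $n = m$ asks that $F_m = 2 F_{m-1} - \sum_{i=m-k-1}^{m-k-1} 2^{0} F_i = 2F_{m-1} - F_{m-k-1}$, which is precisely part (i) at index $m$. For the inductive step, assume the formula holds for some $n \geq m$ and apply part (i) at index $n+1$ (legitimate since $n+1 \geq m+1 \geq k+2$): $F_{n+1} = 2F_n - F_{n-k}$. Substituting the inductive hypothesis for $F_n$ gives $F_{n+1} = 2^{n-m+2}F_{m-1} - 2\sum_{i=m-k-1}^{n-k-1} 2^{n-k-1-i}F_i - F_{n-k}$. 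The doubled sum becomes $\sum_{i=m-k-1}^{n-k-1} 2^{n-k-i}F_i$, and the extra term $-F_{n-k}$ is exactly the $i = n-k$ term of the target sum $\sum_{i=m-k-1}^{n+1-k-1} 2^{n+1-k-1-i}F_i$ with coefficient $2^0$; collecting everything reindexes cleanly to the desired identity with $n$ replaced by $n+1$.

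I do not anticipate a serious obstacle here — both parts are elementary manipulations of the recurrence — so the main thing to get right is bookkeeping: keeping the summation limits and the exponents $2^{n-k-1-i}$ consistent through the reindexing, and checking the degenerate edge cases (e.g. $n = m$, where the sum has a single term, and confirming the lower limit $m-k-1 \geq 0$ is not actually needed since the identity is purely formal in the sequence values). An alternative to the induction, which I might present instead for transparency, is to telescope: write part (i) as $F_j - 2F_{j-1} = -F_{j-k-1}$ for $j = m, m+1, \dots, n$, multiply the equation at index $j$ by $2^{n-j}$, and sum over $j$; the left-hand side telescopes to $F_n - 2^{n-m+1}F_{m-1}$, and the right-hand side is $-\sum_{j=m}^{n} 2^{n-j}F_{j-k-1} = -\sum_{i=m-k-1}^{n-k-1} 2^{n-k-1-i}F_i$ after the substitution $i = j-k-1$. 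This gives part (ii) in one line.
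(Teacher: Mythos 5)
Your proof is correct and matches the paper's: part (i) is the same subtraction of the recurrence at $n$ and $n-1$, and your telescoping alternative for part (ii) (multiplying $F_j - 2F_{j-1} = -F_{j-k-1}$ by a power of $2$ and summing over $j$ from $m$ to $n$) is exactly the paper's argument, with the induction on $n$ being an equivalent repackaging. All index bookkeeping in both of your versions checks out.
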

\begin{proof}
(i) If $n \geq k+1$, $F_n = F_{n-1} + \dots + F_{n-k}$ and $F_{n-1} = F_{n-2} + \dots + F_{n-k-1}$, so $F_n = 2 F_{n-1} - F_{n-k-1}$.

(ii) If $n \geq k+1$, by (i),
$$2^{-n} F_n - 2^{-(n-1)} F_{n-1}= - 2^{-n} F_{n-k-1}.$$
So for any $m \geq k+1$, taking the sum
$$\sum_{i=m}^{n}{(2^{-i} F_i - 2^{-(i-1)} F_{i-1})} = -\sum_{i=m}^{n}{2^{-i} F_{i-k-1}}$$
we obtain $$2^{-n} F_n - 2^{-(m-1)} F_{m-1} = -\sum_{i=m}^{n}{2^{-i} F_{i-k-1}}= -\sum_{i=m-k-1}^{n-k-1}{2^{-i-k-1} F_{i}}$$
and so $$F_n = 2^{n-m+1} F_{m-1} -\sum_{i=m-k-1}^{n-k-1}{2^{n-k-1-i} F_{i}}.$$
\end{proof}

The following theorem gives an explicit formula for $S_n(k)$.

\begin{theorem}
\label{Bkj:theorem1}
Let $k \geq 2$.
The $k$-order Fibonacci sequence $\{S_n(k)\}_{n \geq 0}$ determined by the first $k$ terms $(1,1,\dots, 1)$ satisfies the following formula
\begin{equation}
S_n(k) = 1 - (k-1) \sum_{i=1}^{\lfloor \frac{n+1}{k+1} \rfloor}{ (-1)^i {{n-ik} \choose {i-1}} 2^{n+1-i(k+1)}}.
\end{equation}
\end{theorem}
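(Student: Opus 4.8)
The plan is to recognize the right-hand side as the unique solution of a short linear recurrence with prescribed initial data. Write $G_n$ for the right-hand side of the claimed identity, so that $G_n = 1 - (k-1)\sum_{i\ge 1}(-1)^i\binom{n-ik}{i-1}2^{n+1-i(k+1)}$, where $i$ may be taken to run over all $i \ge 1$ since $\binom{n-ik}{i-1}=0$ exactly when $i(k+1)>n+1$ (so the visible range $1\le i\le\lfloor(n+1)/(k+1)\rfloor$ is precisely the support). By Lemma~\ref{Bkj:lemma2}(i), the sequence $\{S_n(k)\}$ satisfies $S_n = 2S_{n-1} - S_{n-k-1}$ for all $n\ge k+1$; this is a linear recurrence of order $k+1$, whose solution is determined by its values at $n=0,1,\dots,k$. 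Hence it suffices to prove (a) $G_n = S_n$ for $0\le n\le k$, and (b) $G_n = 2G_{n-1} - G_{n-k-1}$ for every $n\ge k+1$.

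For (a): when $0\le n\le k-1$ we have $n+1\le k<k+1$, the defining sum for $G_n$ is empty, and $G_n = 1 = S_n$. When $n=k$ only the term $i=1$ survives, and $G_k = 1 - (k-1)(-1)\binom{0}{0}2^{0} = k = S_{k-1}+\dots+S_0 = S_k$.

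For (b): since $G_n = 1-(k-1)A_n$ with $A_n=\sum_{i\ge1}(-1)^i\binom{n-ik}{i-1}2^{n+1-i(k+1)}$ and $k\ge2$, the recurrence for $G$ is equivalent to $A_n = 2A_{n-1}-A_{n-k-1}$, i.e.
\[
\sum_{i\ge1}(-1)^i\binom{n-ik}{i-1}2^{n+1-i(k+1)}
= 2\sum_{i\ge1}(-1)^i\binom{n-1-ik}{i-1}2^{n-i(k+1)}
-\sum_{i\ge1}(-1)^i\binom{n-k-1-ik}{i-1}2^{n-k-i(k+1)}.
\]
In the first sum on the right, $2\cdot 2^{n-i(k+1)}=2^{n+1-i(k+1)}$. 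In the second, $n-k-1-ik = n-1-(i+1)k$ and $2^{n-k-i(k+1)} = 2^{n+1-(i+1)(k+1)}$, so the substitution $i\mapsto i-1$ turns it into $\sum_{i\ge1}(-1)^{i-1}\binom{n-1-ik}{i-2}2^{n+1-i(k+1)}$. The right-hand side thus becomes $\sum_{i\ge1}(-1)^i 2^{n+1-i(k+1)}\bigl[\binom{n-1-ik}{i-1}+\binom{n-1-ik}{i-2}\bigr]$, and Pascal's rule collapses the bracket to $\binom{n-ik}{i-1}$, which is the left-hand side.

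The one delicate point — and the only real technical nuisance — is the bookkeeping of the summation limits when reindexing and applying Pascal's rule: one must check that for $n\ge k+1$ every term with a negative upper binomial argument, or with lower index exceeding upper, genuinely vanishes under the convention $\binom{a}{b}=0$ for $b<0$ or $0\le a<b$, so no spurious boundary terms are created; this is routine because the support computation above forces $n-ik\ge 0$ (and $n-1-ik\ge 0$ for $i\ge 2$, while the $i=1$ case is $\binom{n-1-k}{0}+\binom{n-1-k}{-1}=1=\binom{n-k}{0}$ using $n\ge k+1$). As a cross-check, one may instead run the same computation through generating functions: $\{S_n(k)\}$ has generating function $\tfrac{1}{1-x}+\tfrac{(k-1)x^{k}}{1-2x+x^{k+1}}$, and expanding the second summand as a geometric series in $\tfrac{x^{k+1}}{1-2x}$ reproduces the stated finite sum term by term.
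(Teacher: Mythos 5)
Your proof is correct and follows essentially the same route as the paper's: verify the formula for $0\le n\le k$ and then show the right-hand side satisfies the recurrence $G_n=2G_{n-1}-G_{n-k-1}$ of Lemma~\ref{Bkj:lemma2}(i) by reindexing the $A_{n-k-1}$ sum and applying Pascal's rule. The only (welcome) difference is that by extending the summation over all $i\ge1$ with vanishing out-of-support binomial coefficients you avoid the paper's case split on $n+1\bmod(k+1)$.
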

\begin{proof}
The desired formula can be proved by induction on $n$ based on Lemma~\ref{Bkj:lemma2}(i). 
The formula is correct for $0 \leq n\leq k$.
For $n \geq k+1$,
two separate cases need to be considered, $n+1 \equiv 0 \pmod{k+1}$ and $n+1 \not\equiv 0 \pmod{k+1}$.

Case 1: $n+1 \equiv 0 \pmod{k+1}$.
Write $n+1=a(k+1)$.

By Lemma~\ref{Bkj:lemma2}(i) and the induction hypothesis,
\begin{align*}
    S_n = &  2 S_{n-1} - S_{n-k-1}\\
    = & 2 - 2(k-1) \sum_{i=1}^{a-1}{ (-1)^i {{a-2+(a-i)k} \choose {i-1}} 2^{(a-i)(k+1)-1}}
    \\
    & -1 + (k-1) \sum_{i=1}^{a-1}{ (-1)^i {{a-2+(a-1-i)k} \choose {i-1}} 2^{(a-1-i)(k+1)}}
    \end{align*}
  
In the second summation, let $i:=i+1$, we have
  \begin{align*}  
    S_n
    = & 1 - (k-1) \sum_{i=1}^{a-1}{ (-1)^i {{a-2+(a-i)k} \choose {i-1}} 2^{(a-i)(k+1)}}
    \\
    &  - (k-1) \sum_{i=2}^{a}{ (-1)^i {{a-2+(a-i)k} \choose {i-2}} 2^{(a-i)(k+1)}}
    \\
    =&
     1 - (k-1) \sum_{i=1}^{a}{ (-1)^i {{a-1+(a-i)k} \choose {i-1}} 2^{(a-i)(k+1)}}
     \\
     =&
     1 - (k-1) \sum_{i=1}^{\lfloor \frac{n+1}{k+1} \rfloor}{ (-1)^i {{n-ik} \choose {i-1}} 2^{n+1-i(k+1)}}.
\end{align*}

Case 2: $n+1 \not\equiv 0 \pmod{k+1}$.
Write $n+1=a(k+1) + r$ where $1 \leq r \leq k$.

By Lemma~\ref{Bkj:lemma2}(i) and the induction hypothesis,
\begin{align*}
    S_n = &  2 S_{n-1} - S_{n-k-1}\\
    =&
    2 -  2(k-1) \sum_{i=1}^{a}{ (-1)^i {{a + r-2+(a-i)k} \choose {i-1}} 2^{ r+(a-i)(k+1)-1}}
    \\
    &-1 + (k-1) \sum_{i=1}^{a-1}{ (-1)^i {{a + r-2+(a-1-i)k} \choose {i-1}} 2^{ r+(a-1-i)(k+1)}}
        \end{align*}
  
In the second summation, let $i:=i+1$, we have
\begin{align*}
S_n
    =&
    1 -  (k-1) \sum_{i=1}^{a}{ (-1)^i {{a + r-2+(a-i)k} \choose {i-1}} 2^{ r+(a-i)(k+1)}}
    \\
    & - (k-1) \sum_{i=2}^{a}{ (-1)^i {{a + r-2+(a-i)k} \choose {i-2}} 2^{ r+(a-i)(k+1)}}
    \\
     =&
     1 - (k-1) \sum_{i=1}^{a}{ (-1)^i {{a + r-1+(a-i)k} \choose {i-1}} 2^{ r+(a-i)(k+1)}}.
    \\
     =&
     1 - (k-1) \sum_{i=1}^{\lfloor \frac{n+1}{k+1} \rfloor}{ (-1)^i {{n-ik} \choose {i-1}} 2^{n+1-i(k+1)}}.
\end{align*}

\end{proof}

The sequences $\{B_n(k,j)\}_{n \geq 0}$ are related to 
$\{T_n(k)\}_{n \geq 0}$ and $\{S_n(k)\}_{n \geq 0}$ as follows:

\begin{lemma}
\label{Bkj:lemma3}
Let $k \geq 2$, $0 \leq j \leq k-1$.
We have

(i) $T_n(k) = \sum_{j=1}^{k-1}{B_n(k,j)}$ for all $n \geq 0$.

(ii) $S_n(k) = \sum_{j=0}^{k-1}{B_n(k,j)}$ for all $n \geq 0$.

(iii) $B_n(k,j) = \frac{1}{k-1} (S_n(k) - S_{n-j-1}(k))$ for all $n \geq j+1$.
\end{lemma}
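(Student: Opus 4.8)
The plan is to obtain (i) and (ii) directly from Theorem~\ref{Bkj:theorem0}, and to prove (iii) in three moves: the case $j=0$, a telescoping identity, and a reduction to a relation involving only the sequences $B_n(k,j)$.

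For (i) and (ii): the sequence $\{T_n(k)\}$ has first terms $(0,1,\dots,1)$, so $T_0(k)=0$ and $T_j(k)=1$ for $1\le j\le k-1$; substituting these values into the formula $T_n(k)=\sum_{j=0}^{k-1}T_j(k)\,B_n(k,j)$ of Theorem~\ref{Bkj:theorem0} gives (i). Since $S_j(k)=1$ for every $0\le j\le k-1$, the same substitution gives (ii).

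For (iii), first consider $j=0$. Subtracting the $k$-order recurrence for $S_{n-1}(k)$ from that for $S_n(k)$ shows that $W_n:=S_n(k)-S_{n-1}(k)$ satisfies $W_n=W_{n-1}+\dots+W_{n-k}$ for all $n\ge k+1$, while $W_1=\dots=W_{k-1}=0$ and $W_k=S_k(k)-S_{k-1}(k)=k-1$. The sequence $(k-1)B_n(k,0)$ satisfies the same recurrence for $n\ge k$ and, using $B_k(k,0)=B_{k-1}(k,0)+\dots+B_0(k,0)=1$, agrees with $W_n$ at $n=1,\dots,k$; hence $S_n(k)-S_{n-1}(k)=(k-1)B_n(k,0)$ for all $n\ge 1$. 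Telescoping $S_n(k)-S_{n-j-1}(k)=\sum_{m=n-j}^{n}\bigl(S_m(k)-S_{m-1}(k)\bigr)$ and applying this case then gives, for $n\ge j+1$,
\[
S_n(k)-S_{n-j-1}(k)=(k-1)\sum_{i=0}^{j}B_{n-i}(k,0),
\]
so (iii) is equivalent to the identity $B_n(k,j)=\sum_{i=0}^{j}B_{n-i}(k,0)$ for $n\ge j+1$.

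I would prove this last identity by induction on $j$, the case $j=0$ being trivial. The inductive step rests on the relation $B_n(k,j)=B_n(k,0)+B_{n-1}(k,j-1)$, valid for $1\le j\le k-1$ and $n\ge 1$: the difference of its two sides satisfies the $k$-order recurrence for $n\ge k+1$ (the one-step shift in $B_{n-1}(k,j-1)$ is what raises the threshold from $k$ to $k+1$), and a direct check at $n=1,\dots,k$ --- using only the definition of $B$ and the fact $B_k(k,i)=1$ --- shows it vanishes there, hence identically. Feeding the inductive hypothesis for $j-1$, applied at index $n-1$, into this relation yields the identity for $j$; dividing by $k-1$ then proves (iii). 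I expect the only real obstacle to be index bookkeeping: each $k$-order recurrence here is valid only from a definite term onwards (usually $n\ge k$ for a single sequence, but $n\ge k+1$ once a one-step shift enters), so one must check that the finitely many small-$n$ values verified are exactly those that determine each sequence, and that neither the telescoping nor the induction on $j$ ever reaches an index below its allowed range.
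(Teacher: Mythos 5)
Your proof is correct, but for part (iii) it takes a genuinely different route from the paper. For (i) and (ii) you do exactly what the paper does (linearity via Theorem~\ref{Bkj:theorem0}). For (iii), the paper's argument is a two-line verification: it extends $S_n(k)$ to negative indices via the recurrence, computes $S_{-1}=-(k-2)$ and $S_{-2}=\dots=S_{-k}=1$, and then checks directly that $\frac{1}{k-1}(S_n-S_{n-j-1})$ has initial values $(0,\dots,0,1,0,\dots,0)$ with the $1$ in position $j$, so it must equal $B_n(k,j)$. You instead avoid negative indices entirely: you prove the $j=0$ case by matching initial data of $S_n-S_{n-1}$ against $(k-1)B_n(k,0)$, telescope to get $S_n-S_{n-j-1}=(k-1)\sum_{i=0}^{j}B_{n-i}(k,0)$, and then establish the auxiliary identity $B_n(k,j)=\sum_{i=0}^{j}B_{n-i}(k,0)$ by induction on $j$ via the relation $B_n(k,j)=B_n(k,0)+B_{n-1}(k,j-1)$. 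I checked the initial-value verifications ($D_n=0$ for $n=1,\dots,k$, with $B_{k-1}(k,j-1)=0$ since $j-1\le k-2$) and the index ranges in the telescoping and the induction; they all work. The paper's proof is shorter and more uniform in $j$, at the cost of the slightly delicate backward extension of $S$; yours is longer but stays within nonnegative indices and produces the identity $B_n(k,j)=\sum_{i=0}^{j}B_{n-i}(k,0)$ as a byproduct, which is a clean structural fact about the basis sequences in its own right.
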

\begin{proof}
The results (i) and (ii) are derived from the linearity of the $k$-order Fibonacci sequences.

(iii) Using the $k$-order Fibonacci recurrence equation, extend the sequence $S_n(k)$ for negative values of $n$. Then we have $S_{-1} = -(k-2)$, $S_{-2} = S_{-3} = \dots = S_{-k} = 1$. 
Define the $k$-order Fibonacci sequence $\{B'_n\}_{n \geq 0}$ as $B'_n = \frac{1}{k-1}(S_n - S_{n-j-1})$. Then all the first $k$ terms of $B'_n$ are zero except $B'_j=1$. It follows that $B_n$ and $B'_n$ are the same $k$-order Fibonacci sequence.
\end{proof}

Using Lemma~\ref{Bkj:lemma3}(iii) and the formula for $S_n$ in Theorem~\ref{Bkj:theorem1}, we derive the following explicit formula for $B_n(k,j)$.

\begin{theorem}
\label{Bkj:theorem2}
Let $k \geq 2$, $0 \leq j \leq k-1$.
The $k$-order Fibonacci sequence $\{B_n(k,j)\}_{n \geq 0}$ satisfies the following formula, for all $n \geq j+1$,
\begin{align*}
B_n(k,j) = & - \sum_{i=1}^{\lfloor \frac{n+1}{k+1} \rfloor}{ (-1)^i {{n-ik} \choose {i-1}} 2^{n+1-i(k+1)}}
\\
&
+ \sum_{i=1}^{\lfloor \frac{n-j}{k+1} \rfloor}{ (-1)^i {{n-j-1-ik} \choose {i-1}} 2^{n-j-i(k+1)}}.
\end{align*}
\end{theorem}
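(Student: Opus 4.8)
The plan is to combine the two ingredients that have already been assembled: the substitution identity of Lemma~\ref{Bkj:lemma3}(iii), namely $B_n(k,j) = \frac{1}{k-1}\bigl(S_n(k) - S_{n-j-1}(k)\bigr)$ valid for $n \geq j+1$, and the closed form for $S_m(k)$ from Theorem~\ref{Bkj:theorem1}. So the first step is simply to write $B_n(k,j) = \frac{1}{k-1}(S_n - S_{n-j-1})$ and substitute the Theorem~\ref{Bkj:theorem1} formula for each of $S_n$ and $S_{n-j-1}$. The two leading $1$'s cancel, the factor $k-1$ in front of each sum cancels the $\frac{1}{k-1}$, and one is left with the difference of the two binomial-weighted sums: the $S_n$ term contributes $-\sum_{i=1}^{\lfloor (n+1)/(k+1)\rfloor} (-1)^i \binom{n-ik}{i-1} 2^{n+1-i(k+1)}$, and subtracting the $S_{n-j-1}$ term flips its sign, contributing $+\sum_{i=1}^{\lfloor (n-j)/(k+1)\rfloor}(-1)^i\binom{(n-j-1)-ik}{i-1}2^{(n-j-1)+1-i(k+1)}$. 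Since $(n-j-1)+1 = n-j$, this is exactly the second sum in the statement, and we are done.

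The only genuine subtlety — and the step I expect to be the main (minor) obstacle — is the boundary case where $S_{n-j-1}(k)$ is being evaluated at an index $m = n-j-1$ that may fail $m \geq 0$, or at any rate where Theorem~\ref{Bkj:theorem1} might not literally apply because it was stated for $\{S_n(k)\}_{n\geq 0}$. Here $n \geq j+1$ guarantees $m = n-j-1 \geq 0$, so at the very least $m$ is a legitimate index of the sequence; and the range of $n$ for which the theorem is ultimately interesting (large $n$) puts us safely in the regime $m \geq k+1$ where the Theorem~\ref{Bkj:theorem1} proof (induction via Lemma~\ref{Bkj:lemma2}(i)) was carried out. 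For the small cases $0 \le m \le k$ one checks directly that the Theorem~\ref{Bkj:theorem1} formula still returns the correct value $S_m = 1$ (the sum being empty since $\lfloor (m+1)/(k+1)\rfloor = 0$ for $m \le k-1$, and for $m=k$ the sum has its single $i=1$ term equal to $(k-1)\binom{0}{0}2^0 = k-1$, giving $1-(k-1)\cdot\frac{1}{?}$ — this needs the quick check that $1 - (k-1)\sum = 1$ is wrong, so in fact one should simply verify $S_k = k$ against the formula, which gives $1-(k-1)(-1)(1)(1) = k$, consistent). Thus the Theorem~\ref{Bkj:theorem1} formula is valid for all $m \geq 0$, and in particular for $m = n-j-1$.

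Having dispensed with that, the remainder is bookkeeping on the two index ranges: the floor bounds $\lfloor (n+1)/(k+1)\rfloor$ and $\lfloor (n-j)/(k+1)\rfloor = \lfloor ((n-j-1)+1)/(k+1)\rfloor$ come out of the substitution automatically with no manipulation needed, and the binomial arguments match term-by-term after writing $(n-j-1)-ik$. I would present the argument as a single short computation: state the substitution, plug in both closed forms, cancel, and read off the result, with a one-line remark justifying the applicability of Theorem~\ref{Bkj:theorem1} at index $n-j-1$ for $n \geq j+1$. No induction is required here since all the inductive work was already done inside Theorem~\ref{Bkj:theorem1}; the present theorem is purely a corollary of the linear relation in Lemma~\ref{Bkj:lemma3}(iii).
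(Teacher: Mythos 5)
Your proposal is correct and is exactly the paper's intended argument: the paper offers no separate proof of Theorem~\ref{Bkj:theorem2}, stating only that it follows by substituting the closed form of Theorem~\ref{Bkj:theorem1} into Lemma~\ref{Bkj:lemma3}(iii), which is precisely your computation (and your side-check that the Theorem~\ref{Bkj:theorem1} formula holds at the index $n-j-1 \geq 0$, including $S_k = k$, resolves correctly despite the momentary hesitation).
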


Using Theorem~\ref{Bkj:theorem2}, we can calculate 
all the values of $B_n(k,j)$. Below are some examples:

\begin{lemma}
\label{Bkj:lemma4}
Let $k \geq 2$, $0 \leq j \leq k-1$.
We have

(i) If $k \leq n \leq k+j$, $$B_n(k,j) = 2^{n-k}.$$

(ii) If $k+j+1 \leq n \leq 2k$, $$B_n(k,j) = 2^{n-k} - 2^{n-k-j-1}.$$

(iii) If $2k+1 \leq n \leq 2k+j+1$, $$B_n(k,j) = 2^{n-k} - 2^{n-k-j-1} - (n-2k) 2^{n-2k-1}.$$

(iv) If $2k+j+2 \leq n \leq 3k+1$, 
$$B_n(k,j)=
2^{n-k} - 2^{n-k-j-1} - (n-2k) 2^{n-2k-1} + (n-2k-j-1) 2^{n-2k-j-2}
.$$

(v)  If $3k+2 \leq n \leq 3k+j+2$,
\begin{align*}
B_n(k,j)= &
 2^{n-k} 
- 2^{n-k-j-1} 
- (n-2k) 2^{n-2k-1} 
\\
&
+ (n-2k-j-1) 2^{n-2k-j-2}  
  + \frac{(n-3k-1)(n-3k)}{2}  2^{n-3k-2} 
.
\end{align*}
\end{lemma}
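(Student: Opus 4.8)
The plan is to prove Lemma~\ref{Bkj:lemma4} by direct specialization of the explicit formula in Theorem~\ref{Bkj:theorem2}. For each of the five claimed ranges of $n$, the key observation is that the upper limits $\lfloor\frac{n+1}{k+1}\rfloor$ and $\lfloor\frac{n-j}{k+1}\rfloor$ of the two sums in Theorem~\ref{Bkj:theorem2} are small fixed integers (0, 1, 2, or 3), so one only needs to write out the handful of surviving terms and simplify. Concretely, I would first record the following floor computations, valid because $0\le j\le k-1$: on $k\le n\le k+j$ we have $\lfloor\frac{n+1}{k+1}\rfloor=1$ and $\lfloor\frac{n-j}{k+1}\rfloor=0$; on $k+j+1\le n\le 2k$ both floors equal $1$; on $2k+1\le n\le 2k+j+1$ the first floor is $2$ and the second is $1$; on $2k+j+2\le n\le 3k+1$ both floors equal $2$; on $3k+2\le n\le 3k+j+2$ the first floor is $3$ and the second is $2$. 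These are elementary inequalities that I would verify once at the start.

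Next, for each range I would substitute these limits into Theorem~\ref{Bkj:theorem2} and evaluate the binomials. For (i): the first sum contributes only $i=1$, giving $-\binom{n-k}{0}2^{n-k}= -2^{n-k}$, wait — one must be careful with the overall sign; the theorem has a leading minus on the first sum, so $i=1$ gives $-(-1)^1\binom{n-k}{0}2^{n-k}=2^{n-k}$, and the second sum is empty, yielding $B_n(k,j)=2^{n-k}$. For (ii) the second sum also picks up $i=1$: $(-1)^1\binom{n-j-1-k}{0}2^{n-j-1-k}=-2^{n-k-j-1}$, giving (ii). For (iii)--(v) the same mechanism applies with $i$ ranging up to $2$ or $3$; the binomials $\binom{n-2k}{1}=n-2k$, $\binom{n-j-1-2k}{1}=n-2k-j-1$, $\binom{n-3k-1}{2}=\frac{(n-3k-1)(n-3k-2)}{2}$, and $\binom{n-j-1-3k}{2}$ appear, and collecting powers of $2$ with the correct signs reproduces each stated expression. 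I would present one or two of these cases in full and indicate that the rest are identical bookkeeping.

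The one genuine subtlety — and the main thing to be careful about rather than the main obstacle — is the edge behavior of the binomial coefficients $\binom{n-ik}{i-1}$ when the top entry is small or when $i-1$ exceeds it. Within the stated ranges this does not actually occur for the terms that are supposed to survive (e.g. in range (iii), $n-2k\ge 1$ so $\binom{n-2k}{1}$ is a genuine positive integer, and $n-k\ge k+1\ge 3$ so $\binom{n-k}{0}=1$), but I would check at the range boundaries that no term is spuriously included or dropped — in particular that at $n=2k$ the second floor is still $1$ and not $0$ (it equals $\lfloor\frac{2k-j}{k+1}\rfloor=1$ since $k+1\le 2k-j\le 2k$ for $0\le j\le k-1$), and symmetric checks at each other boundary. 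Once the floor values are pinned down, the proof is a finite, mechanical expansion, so after establishing the floor lemma I would simply carry out the substitution for each case and simplify, remarking that Theorem~\ref{Bkj:theorem2} requires $n\ge j+1$, which is satisfied throughout since $n\ge k\ge j+1$.
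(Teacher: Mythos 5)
Your proposal is correct and matches the paper's (implicit) proof exactly: the paper states Lemma~\ref{Bkj:lemma4} without proof as a direct calculation from Theorem~\ref{Bkj:theorem2}, which is precisely your plan, and all of your floor-value computations check out. One transcription slip to fix before writing it up: the $i=3$ term of the first sum in case (v) involves $\binom{n-3k}{2}=\frac{(n-3k)(n-3k-1)}{2}$, not $\binom{n-3k-1}{2}$, and the binomial $\binom{n-j-1-3k}{2}$ never actually enters, since the second sum's upper limit is $2$ throughout that range.
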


Combining Theorem~\ref{Bkj:theorem0}
and Theorem~\ref{Bkj:theorem2}, we derive the following
explicit formula for
$F_n(k)$.

\begin{theorem}
\label{Bkj:theorem0B}
Let $k \geq 2$.
Any generalized Fibonacci sequence $\{F_n(k)\}_{n \geq 0}$
of order $k$
satisfies the following formula, for all $n \geq k$,
\begin{align*}
F_n(k) & = -  F_k(k)  
  \sum_{i=1}^{\lfloor \frac{n+1}{k+1} \rfloor}{ (-1)^i {{n-ik} \choose {i-1}} 2^{n+1-i(k+1)}}
\\
&
+ \sum_{j=0}^{k-1} \sum_{i=1}^{\lfloor \frac{n-j}{k+1} \rfloor}{ (-1)^i F_j(k) {{n-j-1-ik} \choose {i-1}} 2^{n-j-i(k+1)}}
\end{align*}
\end{theorem}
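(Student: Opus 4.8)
The plan is to combine the two main results the paper has already established, namely Theorem~\ref{Bkj:theorem0} and Theorem~\ref{Bkj:theorem2}, with the observation that $F_k(k)$ is determined by the initial data. Theorem~\ref{Bkj:theorem0} gives $F_n(k) = \sum_{j=0}^{k-1} F_j(k) B_n(k,j)$, valid for all $n \geq 0$. For $n \geq k$ we certainly have $n \geq j+1$ for every $j$ in the range $0 \leq j \leq k-1$, so Theorem~\ref{Bkj:theorem2} applies to each term $B_n(k,j)$ in the sum. Substituting the explicit formula for $B_n(k,j)$ into the sum yields
\begin{align*}
F_n(k) = & - \sum_{j=0}^{k-1} F_j(k) \sum_{i=1}^{\lfloor \frac{n+1}{k+1} \rfloor}{ (-1)^i {{n-ik} \choose {i-1}} 2^{n+1-i(k+1)}}
\\
& + \sum_{j=0}^{k-1} F_j(k) \sum_{i=1}^{\lfloor \frac{n-j}{k+1} \rfloor}{ (-1)^i {{n-j-1-ik} \choose {i-1}} 2^{n-j-i(k+1)}}.
\end{align*}
The second double sum is already exactly the second double sum in the claimed formula, so nothing more need be done there.

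The only remaining step is to simplify the first double sum. The inner sum over $i$ does not depend on $j$, so it factors out, leaving $-\left(\sum_{j=0}^{k-1} F_j(k)\right) \sum_{i=1}^{\lfloor (n+1)/(k+1) \rfloor} (-1)^i \binom{n-ik}{i-1} 2^{n+1-i(k+1)}$. It therefore suffices to show $\sum_{j=0}^{k-1} F_j(k) = F_k(k)$, which is immediate from the $k$-order recurrence~(1) applied at $n=k$: $F_k = F_{k-1} + F_{k-2} + \dots + F_0$. Replacing $\sum_{j=0}^{k-1} F_j(k)$ by $F_k(k)$ gives precisely the first term in the statement, completing the proof.

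There is essentially no obstacle here: the result is a mechanical consequence of two already-proved theorems plus the defining recurrence, and the only thing to watch is the bookkeeping on the range of validity, namely checking that $n \geq k$ guarantees $n \geq j+1$ for all $j \leq k-1$ so that Theorem~\ref{Bkj:theorem2} may legitimately be invoked for every summand. One could alternatively phrase the whole argument as a one-line substitution followed by the remark $F_k(k) = \sum_{j=0}^{k-1} F_j(k)$, and that is probably the cleanest presentation.
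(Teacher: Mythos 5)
Your proof is correct and is exactly the route the paper takes: the paper derives Theorem~\ref{Bkj:theorem0B} by combining Theorem~\ref{Bkj:theorem0} with Theorem~\ref{Bkj:theorem2} and using $F_k(k)=\sum_{j=0}^{k-1}F_j(k)$ from the recurrence, just as you do. Your check that $n\geq k$ guarantees $n\geq j+1$ for every $j\leq k-1$ is the right piece of bookkeeping.
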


\section{The 2-adic order of $B_n(k,j)$}

Using the formula for $B_n(k,j)$
in Theorem~\ref{Bkj:theorem2}, we obtain the following theorem.

\begin{theorem}
\label{Bkj:theorem3}
Let $k \geq 2$, $0 \leq j \leq k-1$, $n = a(k+1) + r$, $-1 \leq r \leq k-1$.

(i) If $-1 \leq r \leq j-1$ and $a \geq 2$,
\begin{align*}
B_n(k,j)
= &
2^{r+2k+2-j} C_1
+
2^{r+k+2} C_2
-  (-1)^{a} {{a + r+k-j-1} \choose {a-2}} 2^{r+k+1-j}
\\
&
 -  (-1)^{a} {{a + r} \choose {a-1}} 2^{r+1},
\end{align*}
where $C_1$ and $C_2$ are integers.

(ii) If $j \leq r \leq k-1$,
\begin{align*}
B_n(k,j) 
= & 
2^{r+k+2} C_1
+ 2^{ r+k+1-j} C_2
- (-1)^a {{a + r} \choose {a-1}} 2^{r+1}
\\
&
+  (-1)^a {{a+ r-j-1} \choose {a-1}} 2^{ r-j},
\end{align*}
where $C_1$ and $C_2$ are integers.
\end{theorem}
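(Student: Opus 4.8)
The plan is to start from the closed form for $B_n(k,j)$ in Theorem~\ref{Bkj:theorem2}, namely
\[
B_n(k,j) = - \sum_{i=1}^{\lfloor \frac{n+1}{k+1} \rfloor}{ (-1)^i {{n-ik} \choose {i-1}} 2^{n+1-i(k+1)}}
+ \sum_{i=1}^{\lfloor \frac{n-j}{k+1} \rfloor}{ (-1)^i {{n-j-1-ik} \choose {i-1}} 2^{n-j-i(k+1)}},
\]
and to substitute $n = a(k+1)+r$. The exponent $n+1-i(k+1) = (a-i)(k+1)+r+1$ in the first sum decreases by $k+1$ as $i$ increases by $1$, so the terms are listed in order of increasing $2$-adic value. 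The idea is to peel off the last one or two terms (those with $i$ near the top of the summation range) explicitly, and absorb everything with a strictly higher power of $2$ into an integer coefficient $C_1$ or $C_2$. The same is done for the second sum, where the exponent is $n-j-i(k+1) = (a-i)(k+1)+r-j$. The two cases in the statement correspond to whether the top index of the second sum is $a$ (when $r\ge j$, since then $n-j = a(k+1)+(r-j)$ with $0\le r-j\le k-1$) or $a-1$ (when $r<j$, since then $n-j = (a-1)(k+1)+(r-j+k+1)$ and $0\le r-j+k < k$, forcing the floor down to $a-1$); similarly one must check whether $\lfloor (n+1)/(k+1)\rfloor$ equals $a$ or $a+1$, which depends on whether $r+1 \le k$, i.e.\ always $a$ here since $r\le k-1$ gives $r+1\le k$.

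Concretely, first I would carefully pin down the two summation limits as functions of $a, r, j, k$: show $\lfloor (n+1)/(k+1)\rfloor = a$ always (using $-1\le r\le k-1$), and that $\lfloor (n-j)/(k+1)\rfloor = a$ if $j\le r$ and $=a-1$ if $-1\le r\le j-1$. Then, in each case, isolate the $i=a$ term of the first sum, which contributes $(-1)^a\binom{a+r}{a-1}2^{r+1}$ (with the overall minus sign giving $-(-1)^a\binom{a+r}{a-1}2^{r+1}$), and the $i=a-1$ term, which contributes a power $2^{r+1+(k+1)} = 2^{r+k+2}$ — exactly the level of the $C_2$ (case ii) or $C_2$-free part (case i) — so actually all remaining terms $i\le a-1$ of the first sum are divisible by $2^{r+k+2}$ and get folded into $C_1$ in case (ii) or into the $2^{r+k+2}C_2$ bucket in case (i). For the second sum: in case (ii) the top term is $i=a$, giving $(-1)^a\binom{a+r-j-1}{a-1}2^{r-j}$, and the remaining terms $i\le a-1$ are divisible by $2^{r-j+k+1}=2^{r+k+1-j}$, hence absorbed into $C_2$; in case (i) the top term is $i=a-1$, giving $(-1)^{a-1}\binom{a+r+k-j-1}{a-2}2^{r-j+k+1}$ which up to sign is $-(-1)^a\binom{a+r+k-j-1}{a-2}2^{r+k+1-j}$ (matching the stated $C_1$-level term, after renaming $a+r-j-1+k = a+r+k-j-1$), and the rest $i\le a-2$ are divisible by $2^{r-j+2(k+1)} = 2^{r+2k+2-j}$, absorbed into $C_1$.

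I would then reconcile the bookkeeping: in case (i) there are four displayed terms — two at the "lowest" levels $2^{r+1}$ and $2^{r+k+1-j}$ (the genuine leading terms) and two "bucket" terms $2^{r+k+2}C_2$ and $2^{r+2k+2-j}C_1$ collecting everything higher; in case (ii) likewise two leading terms $2^{r+1}$ and $2^{r-j}$ plus two buckets $2^{r+k+2}C_1$ and $2^{r+k+1-j}C_2$. The constraint $a\ge 2$ in part (i) is exactly what guarantees the $i=a-1$ index exists in the first sum and the relevant extra terms exist; I should note that when $a<2$ the formulas in Lemma~\ref{Bkj:lemma4} handle the small-$n$ range directly, so no overlap is lost. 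Finally, I would double-check that the binomial coefficients appearing are the correct ones: e.g.\ for $i=a-1$ in the second sum, $\binom{n-j-1-(a-1)k}{a-2} = \binom{a(k+1)+r-j-1-(a-1)k}{a-2} = \binom{(k+1)+r-j-1}{a-2} = \binom{a+r+k-j-1}{a-2}$? No — this simplifies to $\binom{k+r-j}{a-2}$, which does \emph{not} match unless I've mis-set the limit; so the genuinely delicate point, and the step I expect to be the main obstacle, is getting the index arithmetic inside the binomial coefficients exactly right, because a single off-by-one in identifying which value of $i$ is the top of each sum propagates into both the exponent and the binomial, and the stated answer's binomials ($\binom{a+r+k-j-1}{a-2}$, $\binom{a+r}{a-1}$, etc.) must be matched symbol-for-symbol. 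I would therefore treat the limit computation and the substitution $n=a(k+1)+r$ into each binomial with extreme care, verifying against the explicit small cases of Lemma~\ref{Bkj:lemma4} (taking, say, $a=2$ or $a=3$) before asserting the general identity.
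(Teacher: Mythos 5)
Your approach is exactly the intended one (the paper gives no separate proof of this theorem; it is obtained precisely by substituting $n=a(k+1)+r$ into Theorem~\ref{Bkj:theorem2}, peeling off the top one or two indices of each sum, and absorbing the rest into higher powers of $2$), and your determination of the two summation limits and of the $i=a$ terms is correct. The one place you flag as a possible obstacle is in fact a slip in your own arithmetic: for $i=a-1$ in the second sum the upper entry of the binomial is
\[
n-j-1-(a-1)k \;=\; a(k+1)+r-j-1-(a-1)k \;=\; a+k+r-j-1,
\]
since $a(k+1)-(a-1)k=a+k$, not $k+1$; so the coefficient is $\binom{a+r+k-j-1}{a-2}$, matching the statement symbol for symbol, and with sign $(-1)^{a-1}=-(-1)^a$ and exponent $(k+1)+r-j=r+k+1-j$ as claimed. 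With that correction there is no gap, and the condition $a\ge 2$ is, as you say, exactly what makes the index $i=a-1$ (and the entry $a-2$) legitimate in case (i).
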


Inspired by Theorem~\ref{Bkj:theorem3},
we have the following definition.

\begin{definition}
\label{Bkj:def2}
Let $k \geq 2$, $0 \leq j \leq k-1$, $n = a(k+1) + r$, $-1 \leq r \leq k-1$. 

(i) If $-1 \leq r \leq j-1$ and $a \geq 2$, define
\begin{align*}
\Delta_1 
= & 
(r+k+2) - 
v_2 \left(
{{a+r} \choose {a-1}}
2^{r+1}
\right),
\\
\Delta_2 = &
v_2 \left(
{{a + r+k-j-1} \choose {a-2}} 2^{r+k+1-j}
\right)
-
v_2 \left(
{{a+r} \choose {a-1}}
2^{r+1}
\right)
\end{align*}

(ii) If $j \leq r \leq k-1$, define
\begin{align*}
\Delta_3
= &
(r+k+1-j)-
v_2 \left( 
{{a+ r-j-1} \choose {a-1}}
2^{r-j}
\right)
,
\\
\Delta_4
=&
v_2 \left( 
{{a+ r} \choose {a-1}}
2^{r+1}
\right)
-
v_2 \left( 
{{a+ r-j-1} \choose {a-1}}
2^{r-j}
\right)
.
\end{align*}
\end{definition}

The following theorem is a direct consequence of Theorem~\ref{Bkj:theorem3}.
It allows us to determine $v_2(B_n(k,j))$
for a large range of values of $k$, $j$ and $n$.

\begin{theorem}
\label{Bkj:theorem4}
Let $k \geq 2$, $0 \leq j \leq k-1$, $n = a(k+1) + r$, $-1 \leq r \leq k-1$.

(i) If $r=-1$ or $r=j$ then $
v_2(B_n(k,j)) = 0$.

(ii) If $0 \leq r \leq j-1$ and $a \geq 2$, $\Delta_1 > 0$ and $\Delta_2 > 0$ then
$$
v_2(B_n(k,j))
=
r+1+
v_2 \left( 
{{a + r} \choose {a-1}}
\right).
$$

(iii) If $j+1 \leq r \leq k-1$, $\Delta_3 >0$ and $\Delta_4 >0$
then
$$
v_2(B_n(k,j))
=
r-j+
v_2 \left( 
{{a+ r-j-1} \choose {a-1}}
\right).
$$
\end{theorem}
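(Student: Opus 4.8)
The plan is to read off $v_2(B_n(k,j))$ directly from the structured expressions in Theorem~\ref{Bkj:theorem3}, treating $B_n(k,j)$ as a sum of two ``junk'' terms with high 2-adic valuation and one or two ``dominant'' terms whose valuation we can pin down exactly. The guiding principle is the elementary fact that if $v_2(x) < v_2(y)$ then $v_2(x+y) = v_2(x)$, applied repeatedly; the quantities $\Delta_1,\dots,\Delta_4$ in Definition~\ref{Bkj:def2} are precisely the positivity conditions that certify the needed strict inequalities.

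First I would dispose of part (i). When $r = -1$, write $n = a(k+1) - 1$, so $n+1 = a(k+1)$ is divisible by $k+1$; I would either invoke Theorem~\ref{Bkj:theorem3}(i) with $r=-1$ (the last term is $-(-1)^a\binom{a-1}{a-1}2^0 = \pm 1$, an odd number, while the other three terms all carry a factor $2^{r+1} = 2^0$ times something even, since $r+k+1-j \geq 1$, $r+k+2 \geq 2$, $r+2k+2-j \geq 2$) and conclude $v_2 = 0$; or, more cleanly, evaluate $B_n(k,j)$ from the closed form in Theorem~\ref{Bkj:theorem2} and observe the leading $2^0$ coefficient is odd. When $r = j$, I would use Theorem~\ref{Bkj:theorem3}(ii): the last two terms combine to $(-1)^a 2^{r-j}\bigl(\binom{a+r-j-1}{a-1} - \binom{a+r}{a-1}\bigr) = (-1)^a 2^0(\binom{a-1}{a-1} - \binom{a+j}{a-1})$, and I must check this integer is odd. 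It equals $1 - \binom{a+j}{a-1}$; its parity is the parity of $1 + \binom{a+j}{a-1}$, so I need $\binom{a+j}{a-1}$ to be even — hmm, this needs care, so alternatively I would just verify via Theorem~\ref{Bkj:theorem2} or Lemma~\ref{Bkj:lemma4}(i)/(ii) that the constant term is $2^{n-k}$-free of ambiguity; in fact for $r=j$ one has small-$n$ cases $B_n = 2^{n-k}$ etc., and for general $a$ the cleanest route is: the two junk terms in Theorem~\ref{Bkj:theorem3}(ii) have valuation $\geq r+k+1-j = k+1 > 0$, and the combined last-two-terms expression, re-derived carefully, should be shown odd — this is the one spot I would double-check against the explicit formula rather than trust the sketch.

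For parts (ii) and (iii) the argument is uniform. In case (ii), $0 \leq r \leq j-1$, $a \geq 2$: Theorem~\ref{Bkj:theorem3}(i) gives $B_n(k,j)$ as (junk term with $v_2 \geq r+2k+2-j$) $+$ (junk term with $v_2 \geq r+k+2$) $+$ (term $U$ with $v_2(U) = v_2\bigl(\binom{a+r+k-j-1}{a-2}2^{r+k+1-j}\bigr)$) $+$ (term $V$ with $v_2(V) = r+1 + v_2\binom{a+r}{a-1}$). The hypothesis $\Delta_2 > 0$ says $v_2(U) > v_2(V)$, so $v_2(U+V) = v_2(V)$; the hypothesis $\Delta_1 > 0$ says the second junk term has valuation $> v_2(V)$, and since $r+2k+2-j \geq r+k+2$ (as $k \geq j$) the first junk term does too; adding everything, $v_2(B_n(k,j)) = v_2(V) = r+1+v_2\binom{a+r}{a-1}$, as claimed. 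Case (iii), $j+1 \leq r \leq k-1$, is the mirror image using Theorem~\ref{Bkj:theorem3}(ii): now $V' = (-1)^a\binom{a+r-j-1}{a-1}2^{r-j}$ is the dominant term with $v_2(V') = r-j + v_2\binom{a+r-j-1}{a-1}$, the condition $\Delta_4 > 0$ makes the other explicit term negligible, and $\Delta_3 > 0$ together with $r+k+2 > r+k+1-j$ makes both junk terms negligible.

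The main obstacle is not the valuation bookkeeping, which is routine once Theorem~\ref{Bkj:theorem3} is in hand, but the borderline case $r = j$ in part~(i): there the two non-junk terms of Theorem~\ref{Bkj:theorem3}(ii) are \emph{equal} in valuation ($2^{r-j} = 2^0$ both), so one cannot conclude by the $\min$ rule and must instead show the \emph{sum} of the two leading coefficients is odd. I expect to handle this by a direct parity analysis of $\binom{a+j}{a-1} \bmod 2$ using Lemma~\ref{Bkj:lemma1}(ii) (Kummer's theorem), or — more robustly — by simply re-deriving the value of $B_n(k,j)$ for $r = -1$ and $r = j$ straight from the finite sum in Theorem~\ref{Bkj:theorem2}, where the terms $i=0$ contribute $2^{n+1-0} $-type leading behaviour and one checks the lowest-order $2$-power has an odd coefficient. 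Everything else follows by stringing together the elementary ultrametric inequality with the sign conditions $\Delta_i > 0$ built into Definition~\ref{Bkj:def2}.
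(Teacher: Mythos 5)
Your overall strategy is exactly the paper's: the authors give no written proof of Theorem~\ref{Bkj:theorem4}, stating only that it is ``a direct consequence of Theorem~\ref{Bkj:theorem3}'', and your parts (ii) and (iii) — isolate the dominant term, use $\Delta_1,\Delta_2$ (resp.\ $\Delta_3,\Delta_4$) to certify that every other term has strictly larger $2$-adic valuation, and apply the ultrametric min rule — are precisely that argument, correctly executed.

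The one place you go astray is the $r=j$ case of part (i), and the ``main obstacle'' you identify there is not real: it comes from an arithmetic slip when you combine the two explicit terms of Theorem~\ref{Bkj:theorem3}(ii). Those terms are $-(-1)^a\binom{a+r}{a-1}2^{\,r+1}$ and $+(-1)^a\binom{a+r-j-1}{a-1}2^{\,r-j}$; at $r=j$ their powers of $2$ are $2^{\,j+1}$ and $2^{0}$ respectively, not both $2^{\,r-j}$ as you wrote. So there is no tie in valuation: the second term is $(-1)^a\binom{a-1}{a-1}=\pm1$, odd, while the first carries at least one factor of $2$ (since $j\ge 0$) and the two ``junk'' terms carry $2^{\,j+k+2}$ and $2^{\,k+1}$. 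Hence $v_2(B_n(k,j))=0$ immediately by the min rule, and no parity analysis of $\binom{a+j}{a-1}$ is needed. The same direct reading handles $r=-1$ (the term $-(-1)^a\binom{a-1}{a-1}2^{0}$ is odd, and $2^{\,k-j}$, $2^{\,k+1}$, $2^{\,2k+1-j}$ are all even since $j\le k-1$), with the small cases $a\le 1$ excluded by the hypothesis $a\ge2$ of Theorem~\ref{Bkj:theorem3}(i) covered by Lemma~\ref{Bkj:lemma4}(i), as you note. With that correction your proposal is complete and coincides with the paper's intended proof.
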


Applying Theorem~\ref{Bkj:theorem4} for some particular values of $r$ we obtain the following theorem.

\begin{theorem}
\label{Bkj:theorem5}
Let $k \geq 2$, $0 \leq j \leq k-1$, $n = a(k+1) + r$, $-1 \leq r \leq k-1$.

(i) If $r=0$, $j \geq 1$, 
$a$ is odd then
$
v_2(B_n(k,j))
=
1
$.

(ii) If $r=0$, $1 \leq j \leq k-2$, 
$a$ is even and $v_2(a) \leq k$ then
$
v_2(B_n(k,j))
=
1+
v_2(a)
$.

(iii) If $r=1$, $j \geq 2$,
$a$ is odd
and $v_2(a+1) \leq k+1$
then
$
v_2(B_n(k,j))
=
1+
v_2(a+1)
$.

(iv) If $r=1$, $j \geq 2$
$a$ is even
and $v_2(a) \leq k+1$
then
$
v_2(B_n(k,j))
=
1+
v_2(a)
$.

(v) If $r=1$, $j=0$,
$a$ is odd
then
$
v_2(B_n(k,j))
=
1
$.

(vi) If $r=j+1 \leq k-1$, $j\geq 1$
and $v_2(a) \leq k$ then
$
v_2(B_n(k,j))
=
1 + v_2(a)
$.
\end{theorem}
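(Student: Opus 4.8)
The strategy is to feed the six cases into Theorem~\ref{Bkj:theorem4} with the right choice of $r$ and then evaluate, via Lemma~\ref{Bkj:lemma1}, the $2$-adic quantities that appear. For parts (i) and (ii) I take $r=0$: since $j\ge1$ there, we lie in the range $0\le r\le j-1$, so Theorem~\ref{Bkj:theorem4}(ii) applies and gives $v_2(B_n(k,j))=1+v_2\binom{a}{a-1}$ once $\Delta_1>0$ and $\Delta_2>0$. For parts (iii) and (iv) I take $r=1$: since $j\ge2$ there, again $0\le r\le j-1$, so Theorem~\ref{Bkj:theorem4}(ii) gives $v_2(B_n(k,j))=2+v_2\binom{a+1}{a-1}$. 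For part (v) I take $r=1,\ j=0$, where $j+1=1=r$, and for part (vi) I take $r=j+1\le k-1$, where $j+1\le r\le k-1$; in both, Theorem~\ref{Bkj:theorem4}(iii) applies and gives $v_2(B_n(k,j))=(r-j)+v_2\binom{a+r-j-1}{a-1}$ once $\Delta_3>0$ and $\Delta_4>0$.

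The ``value'' part is then routine. In (i), (ii), (v), (vi) the relevant binomial is $\binom{a}{a-1}=a$, of valuation $v_2(a)$ (which is $0$ for $a$ odd), giving the asserted $1$ or $1+v_2(a)$. In (iii), (iv) it is $\binom{a+1}{a-1}=\binom{a+1}{2}=\tfrac12a(a+1)$, of valuation $v_2(a)+v_2(a+1)-1$; since $a$ and $a+1$ have opposite parity this equals $v_2(a+1)-1$ for $a$ odd and $v_2(a)-1$ for $a$ even, so $2+v_2\binom{a+1}{2}$ becomes $1+v_2(a+1)$ and $1+v_2(a)$ respectively. The ``first'' gaps are equally routine: unwinding Definition~\ref{Bkj:def2} with these $r$ gives $\Delta_1=k+1-v_2(a)$ for $r=0$, $\Delta_1=k+2-v_2(a+1)$ (if $a$ odd) or $k+2-v_2(a)$ (if $a$ even) for $r=1$, and $\Delta_3=k+1-v_2(a)$ in (v) and (vi), so in every case the positivity of that gap is precisely the numerical hypothesis recorded in the statement ($v_2(a)\le k$, $v_2(a+1)\le k+1$, or $v_2(a)\le k+1$); and in (v) one further computes $\Delta_4=v_2(a+1)$, which is positive because $a$ is odd.

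What remains, and the step I expect to be the main obstacle, is the positivity of $\Delta_2$ (in (ii), (iii), (iv)) and $\Delta_4$ (in (vi)). After simplification each of these has the shape $\tau-v_2(a)+v_2\binom{N}{M}$, where $\tau\in\{k-j,\ j+1\}$ and $\binom{N}{M}$ is one of $\binom{a+k-j-1}{a-2}$, $\binom{a+k-j}{a-2}$, $\binom{a+j+1}{a-1}$. When $v_2(a)$ (respectively $v_2(a+1)$) does not exceed $\tau$, positivity is immediate from nonnegativity of $v_2\binom{N}{M}$, with the borderline equality case requiring only that $\binom{N}{M}$ be even. When $v_2(a)$ does exceed $\tau$ one must recover a compensating power of $2$ from $\binom{N}{M}$, and here I would invoke Kummer's theorem --- equivalently Lemma~\ref{Bkj:lemma1}(ii) --- writing out the low-order binary digits of $a-1$ and $a-2$ (which are controlled by $v_2(a)$ and $v_2(a\pm1)$) and counting the carries produced on adding the small fixed quantity $k+1-j$ or $j+2$; the resulting carry count offsets the surplus of $v_2(a)$ over $\tau$ up to an $O(1)$ slack, leaving the gap strictly positive. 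This carry bookkeeping, which is where the exact form of the hypotheses on $k$ and $j$ enters, is the delicate part. Finally, the one value not covered by Theorem~\ref{Bkj:theorem4}(ii), namely $a=1$ in parts (i) and (iii), is handled directly from Lemma~\ref{Bkj:lemma4}(i): there $n=k+1$ or $n=k+2$ and $B_n(k,j)=2^{n-k}$, which agrees with the stated value.
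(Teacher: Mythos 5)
Your reduction to Theorem~\ref{Bkj:theorem4} is exactly the paper's: the same choice of part (ii) versus (iii) in each case, the same evaluation of the main term via $\binom{a}{a-1}=a$ and $\binom{a+1}{a-1}=\tfrac12 a(a+1)$, the same computation of $\Delta_1$, $\Delta_3$ and of $\Delta_4=v_2(a+1)$ in case (v), and the same appeal to Lemma~\ref{Bkj:lemma4}(i) for $a=1$ in (i), (iii) and (v). But the step you yourself flag as ``the main obstacle'' --- positivity of $\Delta_2$ in (ii)--(iv) and of $\Delta_4$ in (vi) --- is left as an unexecuted sketch, and that sketch is not yet a proof. Saying that Kummer carry-counting ``offsets the surplus of $v_2(a)$ over $\tau$ up to an $O(1)$ slack, leaving the gap strictly positive'' is essentially a restatement of what must be shown: the slack has to be bounded by $\tau$ \emph{exactly}, and that bound is where the hypotheses $1\le j\le k-2$ and $j\ge 1$ do their work. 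Likewise your borderline case ($v_2(a)=\tau$, ``requiring only that $\binom{N}{M}$ be even'') is asserted, not proved.

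The paper closes this gap with an algebraic identity rather than digit bookkeeping: shift the lower index of the binomial up to where it is easy to control, via
$$
\binom{a+k-j-1}{a-2}=\binom{a+k-j-1}{a}\cdot\frac{a(a-1)}{(k-j)(k-j+1)},
\qquad
\binom{a+j+1}{a-1}=\binom{a+j+1}{a}\cdot\frac{a}{j+2}.
$$
Taking $v_2$ of these, the term $+v_2(a)$ (or $+v_2(a+1)$ in case (iii), where one also picks up $v_2(a-1)\ge 1$) cancels the deficit in $\Delta_2$ or $\Delta_4$ \emph{exactly}, leaving
$$
\Delta_2\;\ge\;k-j-v_2(k-j)-v_2(k-j+1),
\qquad
\Delta_4\;\ge\;j+1-v_2(j+2),
$$
and analogous bounds with three consecutive factors $k-j,\,k-j+1,\,k-j+2$ in cases (iii) and (iv). These right-hand sides are positive by elementary inequalities ($v_2(m)+v_2(m+1)<m$ for $m\ge 2$, and $v_2(j+2)\le j$ for $j\ge 1$), with no residual dependence on $a$ and no case split on whether $v_2(a)$ exceeds $\tau$. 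I recommend you replace the carry-counting plan with this index-shift computation; as written, your argument has a genuine hole at its central step.
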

\begin{proof}
(i)
When $a=1$, it follows from Lemma~\ref{Bkj:lemma4}(i).

When $a$ is odd and $a > 1$, apply
Theorem~\ref{Bkj:theorem4}(ii) with $r=0$.
We have
$$\Delta_1=
(k+2) - 
v_2 \left(
{{a} \choose {a-1}}
2^{1}
\right)
=
k+1  > 0$$
and
\begin{align*}
\Delta_2 
& =
v_2 \left(
{{a +k-j-1} \choose {a-2}} 2^{k+1-j}
\right)
-
v_2 \left(
{{a} \choose {a-1}}
2^{1}
\right)
\\
& =  k-j
+ v_2 \left( {{a+k-j-1} \choose {a-2}} \right) > 0.
\end{align*}
Therefore,
$
v_2(B_n(k,j))
=
1
$.
   
(ii)
Apply
Theorem~\ref{Bkj:theorem4}(ii) with $r=0$.
As $a$ is even,
we have
$$\Delta_1
=
(k+2) - 
v_2 \left(
{{a} \choose {a-1}}
2^{1}
\right)
=
k+1-v_2(a)  > 0$$
and
\begin{align*}
\Delta_2 
& =
v_2 \left(
{{a +k-j-1} \choose {a-2}} 2^{k+1-j}
\right)
-
v_2 \left(
{{a} \choose {a-1}}
2^{1}
\right)
\\
& =  k-j
-v_2(k-j)-v_2(k-j+1)
+ v_2 \left( {{a+k-j-1} \choose {a}} \right)
\\
& \geq 
k-j
-v_2(k-j)-v_2(k-j+1) > 0.
\end{align*}
Therefore, 
$
v_2(B_n(k,j))
=
1+
v_2(a)
$.
  
(iii)
When $a=1$, it follows from Lemma~\ref{Bkj:lemma4}(i).

When $a$ is odd and $a > 1$, apply
Theorem~\ref{Bkj:theorem4}(ii) with $r=1$.
We have
$$\Delta_1 
=
(k+3) - 
v_2 \left(
{{a+1} \choose {a-1}}
2^{2}
\right)
= k+2 -v_2(a+1) > 0$$
and 
\begin{align*}
\Delta_2 
=&
v_2 \left(
{{a +k-j} \choose {a-2}} 2^{k+2-j}
\right)
-
v_2 \left(
{{a+1} \choose {a-1}}
2^{2}
\right)
\\
= & k-j+1 + v_2(a-1)
+v_2 \left( {{a+k-j} \choose {a+1}} \right)
\\
& - v_2(k-j) -v_2(k-j+1) - v_2(k-j+2)
\\
 \geq &
k-j+2
- v_2(k-j) -v_2(k-j+1) - v_2(k-j+2) > 0.
\end{align*}
Therefore,
$v_2(B_n(k,j)) 
= 1 + v_2(a+1)$.

(iv)
Apply Theorem~\ref{Bkj:theorem4}(ii) with $r=1$.
As $a$ is even,
we have
$$\Delta_1 
=
(k+3) - 
v_2 \left(
{{a+1} \choose {a-1}}
2^{2}
\right)
= k+2 -v_2(a) > 0$$
and 
\begin{align*}
\Delta_2 
=&
v_2 \left(
{{a +k-j} \choose {a-2}} 2^{k+2-j}
\right)
-
v_2 \left(
{{a+1} \choose {a-1}}
2^{2}
\right)
\\
= & k-j+1 
+v_2 \left( {{a+k-j} \choose {a}} \right)
-v_2(k-j+1)-v_2(k-j+2)
\\
\geq  & k-j+1 
-v_2(k-j+1)-v_2(k-j+2)
> 0.
\end{align*}
Therefore,
$v_2(B_n(k,j)) 
= 1 + v_2(a)$.

(v)
When $a=1$, it follows from Lemma~\ref{Bkj:lemma4}(i).

When $a$ is odd and $a >1$,
apply Theorem~\ref{Bkj:theorem4}(iii) with $j=0$ and $r=1$.
We have
$$\Delta_3 
=(k+2)-
v_2 \left( 
{{a} \choose {a-1}}
2^{1}
\right)
= k+1>0$$
and
$$\Delta_4
=
v_2 \left( 
{{a+ 1} \choose {a-1}}
2^{2}
\right)
-
v_2 \left( 
{{a} \choose {a-1}}
2^{1}
\right)
= v_2(a+1) > 0.$$
Therefore,
$v_2(B_n(k,j)) = 1$.

(vi)
Apply
Theorem~\ref{Bkj:theorem4}(iii) with $r=j+1$.
We have
$$\Delta_3
=
(k+2)-
v_2 \left( 
{{a} \choose {a-1}}
2^{1}
\right)
=
k+1 -v_2(a) > 0$$
and
\begin{align*}
\Delta_4 
& =
v_2 \left( 
{{a+ j+1} \choose {a-1}}
2^{j+2}
\right)
-
v_2 \left( 
{{a} \choose {a-1}}
2^{1}
\right)
\\
& =
v_2 \left( {{a+j+1} \choose {a}} \right) 
+ j + 1 - v_2(j+2)
\\
& \geq j + 1 - v_2(j+2).
\end{align*}
As $j \geq 1$, $v_2(j+2) \leq j$
so $\Delta_4 > 0$ and
therefore,
$
v_2(B_n(k,j))
=
1 + v_2(a)
$.
\end{proof}

The following Lemma~\ref{Bkj:lemma5}
and Lemma~\ref{Bkj:lemma6}
are supplementary to
Theorem~\ref{Bkj:theorem5}(ii).

\begin{lemma}
\label{Bkj:lemma5}
Let $1 \leq j < k-1$, $n = a(k+1)$ and $v_2(a) =k+1$ then
$$
v_2(B_n(k,j))
=
k+2
.$$
\end{lemma}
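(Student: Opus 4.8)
The plan is to return to the exact expansion of $B_n(k,j)$ behind Theorem~\ref{Bkj:theorem3}(i) --- that is, to substitute $n=a(k+1)$ directly into Theorem~\ref{Bkj:theorem2} --- and to track $2$-adic valuations term by term. This is necessary because the usual route through Theorem~\ref{Bkj:theorem4}(ii) breaks down in precisely this case: for $r=0$ and $a$ even one has $\Delta_1=k+1-v_2(a)$, which is $0$ rather than positive exactly when $v_2(a)=k+1$. So I would first set $n=a(k+1)$ (hence $r=0$) and write $a=2^{k+1}u$ with $u$ odd; the hypothesis $1\le j<k-1$ forces $k\ge 3$, and $v_2(a)=k+1$ forces $a\ge 2^{k+1}\ge 8$, so Theorem~\ref{Bkj:theorem2} applies. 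Substituting $n=a(k+1)$ into the two sums of Theorem~\ref{Bkj:theorem2} and re-indexing each by $\ell=a-i$ (using $(-1)^a=1$), the first sum yields $-2a$ at $\ell=0$, $+\binom{a+k}{a-2}2^{k+2}$ at $\ell=1$, and a tail divisible by $2^{2k+3}$ for $\ell\ge2$, while the second sum yields $-\binom{a+k-j-1}{a-2}2^{k+1-j}$ at $\ell=1$ and a tail divisible by $2^{2k+2-j}$ for $\ell\ge2$. Since $j\le k-2$ the combined tail $R$ satisfies $v_2(R)\ge k+4$, so
\[
B_n(k,j)=-2a+\binom{a+k}{a-2}2^{k+2}-\binom{a+k-j-1}{a-2}2^{k+1-j}+R .
\]

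Next I would evaluate the two binomial valuations using Lemma~\ref{Bkj:lemma1}. Since $a=2^{k+1}u$ with $u$ odd, Lemma~\ref{Bkj:lemma1}(iv) gives $s_2(a-2)=s_2(2^{k}u-1)=s_2(u)+k-1$, and because $2^{k+1}$ exceeds each of $k$, $k+2$ and $k-j-1$, the relevant binary expansions do not overlap, so $s_2(a+k)=s_2(u)+s_2(k)$ and $s_2(a+k-j-1)=s_2(u)+s_2(k-j-1)$. Substituting these into Lemma~\ref{Bkj:lemma1}(ii) and simplifying the digit-sum differences with Lemma~\ref{Bkj:lemma1}(v) should give
\[
v_2\binom{a+k}{a-2}=k+1-v_2(k+1)-v_2(k+2),\qquad v_2\binom{a+k-j-1}{a-2}=k+1-v_2(k-j)-v_2(k-j+1).
\]
I would then apply the elementary inequality $v_2(t)+v_2(t+1)\le t-1$ (valid for $t\ge 2$: one of $t,t+1$ is odd, and the even one $m$ obeys $2^{v_2(m)}\le m\le t+1<2^{t}$) with $t=k+1$ and with $t=k-j$, obtaining $v_2\binom{a+k}{a-2}\ge 1$ and $v_2\binom{a+k-j-1}{a-2}\ge j+2$.

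Finally I would reassemble the four contributions. Writing $-2a=-2^{k+2}u$, the first two terms combine to $2^{k+2}\bigl(\binom{a+k}{a-2}-u\bigr)$, whose valuation is exactly $k+2$ since $\binom{a+k}{a-2}$ is even and $u$ is odd; the third term has valuation at least $(j+2)+(k+1-j)=k+3$; and $v_2(R)\ge k+4$. The leading term thus strictly dominates and $v_2(B_n(k,j))=k+2$.

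The crux --- and the only genuinely delicate point --- is the parity statement $v_2\binom{a+k}{a-2}\ge 1$. Everything else is bookkeeping of exponents, but this parity is exactly what prevents the generic term $-2a$ (valuation $k+2$) from being annihilated modulo $2^{k+3}$ by the correction term $\binom{a+k}{a-2}2^{k+2}$ (which also has valuation $\ge k+2$); without it the borderline value $v_2(a)=k+1$ could not be settled, which is why it needs a separate lemma. I would also double-check that the two digit-sum identities above hold throughout the range $k\ge 3$, $1\le j\le k-2$, paying attention to the non-overlap of binary expansions used in $s_2(a+k)=s_2(u)+s_2(k)$ and $s_2(a+k-j-1)=s_2(u)+s_2(k-j-1)$, since that is precisely where the constraints on $j$ enter.
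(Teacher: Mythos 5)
Your proposal is correct and follows essentially the same route as the paper: substitute $n=a(k+1)$ into Theorem~\ref{Bkj:theorem2}, isolate the terms $-2a$, $\binom{a+k}{a-2}2^{k+2}$ and $-\binom{a+k-j-1}{a-2}2^{k+1-j}$ modulo a high power of $2$, and use the digit-sum identities of Lemma~\ref{Bkj:lemma1} to show the first of these has valuation exactly $k+2$ while the others have strictly larger valuation. The only cosmetic difference is that you compute the two binomial valuations exactly via Lemma~\ref{Bkj:lemma1}(v) before bounding them, whereas the paper bounds them directly, and both arguments hinge on the same key parity fact $v_2\binom{a+k}{a-2}\ge 1$.
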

\begin{proof}
When $n=a(k+1)$ and $1 \leq j < k-1$,
the formula for $B_n(k,j)$
in Theorem~\ref{Bkj:theorem2}
becomes
\begin{align*}
B_n(k,j) = & - \sum_{i=1}^{a}{ (-1)^i {{a(k+1)-ik} \choose {i-1}} 2^{(a-i)(k+1)+1}}
\\
&
+ \sum_{i=1}^{a-1}{ (-1)^i {{a(k+1)-j-1-ik} \choose {i-1}} 2^{(a-i)(k+1)-j}}.
\end{align*}

As $a$ is even, we have 
\begin{align*}
B_n(k,j)
= &
-2a + {{k+a} \choose {a-2}} 2^{k+2}
+
2^{2k+3} C_1
\\
&
-  {{a +k-j-1} \choose {a-2}} 2^{k+1-j}
+
2^{2k+2-j} C_2
 ,
\end{align*}
where $C_1$ and $C_2$ are integers.
We will show that
\begin{equation}
\label{B:EQN1}
    v_2 \left( {{k+a} \choose {a-2}} 2^{k+2} \right) 
   >  v_2(2a) = k+2
\end{equation}
and
\begin{equation}
\label{B:EQN2}
    v_2 \left( 
{{a +k-j-1} \choose {a-2}} 2^{k+1-j}
\right)
> v_2(2a) = k+2,
\end{equation}
therefore, 
$
v_2(B_n(k,j))
= v_2(2a)=
k+2
$.

To prove (\ref{B:EQN1}),
use Lemma~\ref{Bkj:lemma1}(ii), we have
\begin{align*}
   v_2 \left( {{k+a} \choose {a-2}}  \right) 
   = &  s_2(a-2) + s_2(k+2) - s_2(k+a).
\end{align*}

Write $a = 2^{k+1} a'$ where $a'$ is an odd number.
Then $s_2(a-2) = s_2(a') + k-1$ and $s_2(k+a) = s_2(k) + s_2(a')$.
So $$v_2 \left( {{k+a} \choose {a-2}}  \right) 
   =  k -1 + s_2(k+2) - s_2(k) \geq  s_2(k+2) \geq 1,$$
 and we obtain (\ref{B:EQN1}).
 
 To prove (\ref{B:EQN2}),
write
\begin{align*}
v_2 \left( 
{{a +k-j-1} \choose {a-2}}
\right)
= &
v_2 \left( 
{{a +k-j-1} \choose {a}}
\right)
\\
&
+v_2(a)
-v_2(k-j)
-v_2(k-j+1)
\\
\geq &
v_2(a)
-v_2(k-j)
-v_2(k-j+1)
.
\end{align*}

As $k-j>1$,
$$v_2(k-j)
+v_2(k-j+1) < k-j,$$
so we have
$$
v_2 \left( 
{{a +k-j-1} \choose {a-2}} 
\right)
> v_2(a) - k + j,
$$
 and we obtain (\ref{B:EQN2}).
\end{proof}

\begin{lemma}
\label{Bkj:lemma6}
Let $k \geq 2$, $j= k-1$, $n = a(k+1)$,
$a$ is even and $v_2(a) \leq k/2$ then
$$
v_2(B_n(k,j))
=1 + 2 v_2(a)
.$$
\end{lemma}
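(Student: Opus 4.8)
The plan is to specialize the explicit formula of Theorem~\ref{Bkj:theorem2} to $n=a(k+1)$ and $j=k-1$, and then isolate the summands of smallest $2$-adic valuation. Note first that $a$ is even and nonzero, so $a\ge 2$ and $n=a(k+1)\ge 2(k+1)>k=j+1$, hence Theorem~\ref{Bkj:theorem2} applies. Since $\lfloor\frac{n+1}{k+1}\rfloor=a$ and $\lfloor\frac{n-j}{k+1}\rfloor=a-1$, exactly as in the proof of Lemma~\ref{Bkj:lemma5} one gets
\begin{align*}
B_n(k,j) = & -\sum_{i=1}^{a}(-1)^i\binom{a(k+1)-ik}{i-1}2^{(a-i)(k+1)+1}\\
& +\sum_{i=1}^{a-1}(-1)^i\binom{a(k+1)-k-ik}{i-1}2^{(a-i)(k+1)-k+1}.
\end{align*}

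First I would bound the power of $2$ in each summand. In the first sum the exponent is $(a-i)(k+1)+1$, which is $\ge k+2$ as soon as $i\le a-1$; in the second sum the exponent is $(a-i)(k+1)-k+1$, which is $\ge k+3$ as soon as $i\le a-2$. Hence every summand \emph{except} the $i=a$ term of the first sum and the $i=a-1$ term of the second sum is divisible by $2^{k+2}$.

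Next I would evaluate the two exceptional terms. At $i=a$ the first sum contributes $-(-1)^a\binom{a}{a-1}2^{1}=-2a$ (using that $a$ is even), and at $i=a-1$ the second sum contributes $(-1)^{a-1}\binom{a}{a-2}2^{2}=-2a(a-1)$ (using $\binom{a}{a-2}=\binom{a}{2}=a(a-1)/2$). Their sum is $-2a-2a(a-1)=-2a^{2}$, whose $2$-adic valuation is $1+2v_2(a)$. Finally, since $v_2(a)\le k/2$ we have $1+2v_2(a)\le k+1<k+2$, so $-2a^{2}$ has strictly smaller valuation than every other summand; therefore $v_2(B_n(k,j))=v_2(-2a^{2})=1+2v_2(a)$.

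The argument is essentially bookkeeping on indices and binomial coefficients. The one subtlety — and the step I expect to be the only genuine obstacle — is that two distinct summands (the $i=a$ term of the first sum and the $i=a-1$ term of the second) both attain the minimal exponent and partially cancel, so they must be added together before the valuation is read off; a naive term-by-term estimate would yield only the weaker bound $v_2(B_n(k,j))\ge 1+v_2(a)$.
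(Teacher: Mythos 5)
Your proposal is correct and is essentially the paper's own proof: both specialize Theorem~\ref{Bkj:theorem2} to $n=a(k+1)$, $j=k-1$, isolate the $i=a$ term of the first sum and the $i=a-1$ term of the second (which combine to $-2a-2a(a-1)=-2a^2$), and observe that every remaining summand is divisible by $2^{k+2}$ while $v_2(2a^2)=1+2v_2(a)\le k+1<k+2$. Your closing remark about the two minimal-exponent terms needing to be added before reading off the valuation is exactly the point of the paper's $-2a+2^{k+2}C_1-2a(a-1)+2^{k+3}C_2$ bookkeeping.
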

\begin{proof}
When $n=a(k+1)$ and $j=k-1$,
the formula for $B_n(k,j)$
in Theorem~\ref{Bkj:theorem2}
becomes
\begin{align*}
B_n(k,j) = & - \sum_{i=1}^{a}{ (-1)^i {{a(k+1)-ik} \choose {i-1}} 2^{(a-i)(k+1)+1}}
\\
&
+ \sum_{i=1}^{a-1}{ (-1)^i {{a(k+1)-k-ik} \choose {i-1}} 2^{(a-1-i)(k+1)+2}}.
\end{align*}

As $a$ is even, we have
\begin{align*}
B_n(k,j)
= &
-2a + 2^{k+2} C_1
-2a(a-1) + 2^{k+3} C_2
\\
= &
-2 a^2 +  2^{k+2} C_1
 + 2^{k+3} C_2,
\end{align*}
where $C_1$ and $C_2$ are integers.

Since $v_2(2a^2) = 1 + 2 v_2(a) < k+2$,
$v_2(B_n(k,j)) = v_2(2a^2)=1 + 2 v_2(a)$.
\end{proof}

The following Lemma~\ref{Bkj:lemma7}
is
 supplementary to
Theorem~\ref{Bkj:theorem5}(v).

\begin{lemma}
\label{Bkj:lemma7}
Let $k \geq 2$, $j=0$, $n=a(k+1) + 1$,
$a$ is even and $v_2(a) \leq k/2$
then
$
v_2(B_n(k,j))
=
1 + 2 v_2(a)
$.
\end{lemma}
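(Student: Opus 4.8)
The plan is to mimic the proof of Lemma~\ref{Bkj:lemma6} almost verbatim, since the situation is parallel: $j=0$ instead of $j=k-1$, and $n=a(k+1)+1$ instead of $n=a(k+1)$. First I would substitute $j=0$ and $n=a(k+1)+1$ into the formula of Theorem~\ref{Bkj:theorem2}. The first sum has upper limit $\lfloor (n+1)/(k+1)\rfloor = \lfloor (a(k+1)+2)/(k+1)\rfloor = a$ (since $2\le k+1$), and its generic term is $(-1)^i\binom{n-ik}{i-1}2^{n+1-i(k+1)} = (-1)^i\binom{a(k+1)+1-ik}{i-1}2^{(a-i)(k+1)+2}$. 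The second sum has upper limit $\lfloor (n-j)/(k+1)\rfloor = \lfloor (a(k+1)+1)/(k+1)\rfloor = a$, and generic term $(-1)^i\binom{n-j-1-ik}{i-1}2^{n-j-i(k+1)} = (-1)^i\binom{a(k+1)-ik}{i-1}2^{(a-i)(k+1)+1}$.

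Next I would isolate, in each sum, the two lowest-power-of-$2$ terms, namely $i=a$ and $i=a-1$, and absorb the rest into multiples of a high power of $2$. For the first sum, $i=a$ gives $(-1)^a\binom{a+1}{a-1}2^2 = 4\binom{a+1}{2}=2a(a+1)$ and $i=a-1$ gives a term divisible by $2^{k+1+2}=2^{k+3}$; for the second sum, $i=a$ gives $(-1)^a\binom{a}{a-1}2^1 = 2a$ and $i=a-1$ gives a term divisible by $2^{k+1+1}=2^{k+2}$. Using that $a$ is even, the $i=a$ contributions combine to $-2a(a+1)+2a\cdot(\text{sign})$; I would track signs carefully — with the overall minus on the first sum and plus on the second, and $(-1)^a=1$, this should collapse to something of shape $-2a^2 + 2^{k+2}C_1 + 2^{k+3}C_2$ exactly as in Lemma~\ref{Bkj:lemma6}, after the $\pm 2a$ terms cancel. (The key arithmetic check: $-2a(a+1)+2a = -2a^2$.)

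Finally, since $v_2(2a^2) = 1+2v_2(a)$ and the hypothesis $v_2(a)\le k/2$ gives $1+2v_2(a) \le 1+k < k+2$, the term $-2a^2$ strictly dominates the $2$-adic valuation of the error terms $2^{k+2}C_1+2^{k+3}C_2$, so $v_2(B_n(k,j)) = v_2(2a^2) = 1+2v_2(a)$.

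The main obstacle I anticipate is the bookkeeping in the middle step: verifying that for $i\le a-2$ the term $2^{(a-i)(k+1)+c}$ (with $c=1$ or $2$) is divisible by $2^{k+3}$ — this needs $(a-i)(k+1)+c \ge k+3$, i.e. $(a-i-1)(k+1)\ge 3-c$, which holds for $i\le a-2$ and $k\ge2$ — and, more delicately, confirming that the binomial coefficients $\binom{a+1}{a-1}$ and $\binom{a}{a-1}$ appearing at $i=a$ are genuinely odd-part contributions that do not secretly raise the valuation (they are $\binom{a+1}{2}$ and $a$, and since $a$ is even, $\binom{a+1}{2}=a(a+1)/2$ has $v_2 = v_2(a)-1+v_2(a+1)=v_2(a)-1$, so $4\binom{a+1}{2}$ has valuation $v_2(a)+1$, matching $v_2(2a)=v_2(a)+1$ — this is why the cancellation of leading valuations is exact and the next correction is $-2a^2$). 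Once these divisibility bounds and the sign of the $-2a^2$ term are pinned down, the conclusion is immediate.
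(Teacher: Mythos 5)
Your proposal is correct and follows essentially the same route as the paper: substitute $j=0$, $n=a(k+1)+1$ into Theorem~\ref{Bkj:theorem2}, peel off the $i=a$ terms $-2a(a+1)$ and $+2a$ (using $a$ even), absorb the rest into $2^{k+3}C_1+2^{k+2}C_2$, and conclude from $v_2(2a^2)=1+2v_2(a)<k+2$. All of your intermediate computations (the exponents $(a-i)(k+1)+2$ and $(a-i)(k+1)+1$, the binomials $\binom{a+1}{a-1}$ and $\binom{a}{a-1}$, and the cancellation $-2a(a+1)+2a=-2a^2$) match the paper's.
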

\begin{proof}
When $n=a(k+1) + 1$ and $j=0$,
the formula for $B_n(k,j)$
in Theorem~\ref{Bkj:theorem2}
becomes
\begin{align*}
B_n(k,j) = & - \sum_{i=1}^{a}{ (-1)^i {{a(k+1) + 1-ik} \choose {i-1}} 2^{(a-i)(k+1) + 2}}
\\
&
+ \sum_{i=1}^{a}{ (-1)^i {{a(k+1) -ik} \choose {i-1}} 2^{(a-i)(k+1) + 1}}.
\end{align*}

As $a$ is even, we have
\begin{align*}
B_n(k,j) 
= &
-2a(a+1) + 2^{k+3} C_1 + 2a + 2^{k+2} C_2
\\
= &
-2a^2 + 2^{k+3} C_1 + 2^{k+2} C_2
\end{align*}
where $C_1$ and $C_2$ are integers.

Since $v_2(2a^2) = 1 + 2 v_2(a) < k+2$,
$v_2(B_n(k,j)) = v_2(2a^2)=1 + 2 v_2(a)$.
\end{proof}

\section{The 2-adic order of $F_n(k)$}

Using the formula for $F_n(k)$
in Theorem~\ref{Bkj:theorem0B}, we obtain the following theorem.

\begin{theorem}
\label{Bkj:thmFn1}
Let $k \geq 2$, 
$n = a(k+1) + r$, $-1 \leq r \leq k-1$.

(i) If $r=-1$ and $a \geq 2$, for all $n \geq k$,
\begin{align*}
F_n(k) & =  
-
(-1)^a F_k(k)
-  
2^{k+1} F_k(k) C_1
- (-1)^{a} \sum_{j=0}^{k-1} F_j(k)
    {{k + a-j-2} \choose {a-2}} 2^{k-j}  
+ 2^{k+2} C_2,
\end{align*}
where $C_1$ and $C_2$ are integers.

(ii)
If $r \geq 0$ and $a \geq 2$,
for all $n \geq k$,
\begin{align*}
F_n(k) 
& = -  (-1)^a F_k(k)  
  {{a+r} \choose {a-1}} 2^{r+1}
+ (-1)^a \sum_{j=0}^{r} F_j(k)
    {{a+r-j-1} \choose {a-1}} 2^{r-j}
\\
&
- (-1)^{a} \sum_{j=r+1}^{k-1} F_j(k)
    {{k+a+r-j-1} \choose {a-2}} 2^{k+1+r-j}
+ 2^{k+1} C
\end{align*}
where $C$ is an integer.
\end{theorem}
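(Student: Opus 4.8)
The plan is to substitute the explicit formula for $F_n(k)$ from Theorem~\ref{Bkj:theorem0B}, write $n=a(k+1)+r$ with $-1\le r\le k-1$, and in each of the two (double) sums peel off the terms of smallest $2$-adic order, showing that every remaining term is divisible by the advertised power of $2$. This is the $F_n(k)$-analogue of the passage from Theorem~\ref{Bkj:theorem2} to Theorem~\ref{Bkj:theorem3}, and the computation runs in parallel. Note that $a\ge2$ forces $n\ge 2k+1\ge k$, so the formula of Theorem~\ref{Bkj:theorem0B} is available.

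First I would pin down the summation ranges. A short computation gives $\lfloor\tfrac{n+1}{k+1}\rfloor=a$ for every $r$ in range, while $\lfloor\tfrac{n-j}{k+1}\rfloor=a$ when $0\le j\le r$ and $\lfloor\tfrac{n-j}{k+1}\rfloor=a-1$ when $r<j\le k-1$ (so for $r=-1$ it is $a-1$ for all $j$). Rewriting the exponents, the index-$i$ term of the first sum carries $2^{(a-i)(k+1)+r+1}$ and the index-$(i,j)$ term of the second sum carries $2^{(a-i)(k+1)+r-j}$; hence within each sum the $2$-adic order is smallest at the largest admissible $i$. The binomial coefficients at those leading indices simplify via $\binom{n-ak}{a-1}=\binom{a+r}{a-1}$, $\binom{n-j-1-ak}{a-1}=\binom{a+r-j-1}{a-1}$ and $\binom{n-j-1-(a-1)k}{a-2}=\binom{k+a+r-j-1}{a-2}$, which are immediate.

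In case~(ii) ($r\ge0$, $a\ge2$): from the first sum, $i=a$ gives $-(-1)^aF_k(k)\binom{a+r}{a-1}2^{r+1}$ and every $i\le a-1$ has exponent $(a-i)(k+1)+r+1\ge k+2$; from the second sum, for $j\le r$ the index $i=a$ gives $(-1)^aF_j(k)\binom{a+r-j-1}{a-1}2^{r-j}$ while every $i\le a-1$ has exponent $\ge(k+1)+(r-j)\ge k+1$, and for $j>r$ the index $i=a-1$ gives $-(-1)^aF_j(k)\binom{k+a+r-j-1}{a-2}2^{k+1+r-j}$ while every $i\le a-2$ has exponent $\ge 2(k+1)+(r-j)\ge k+3$. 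Gathering all the discarded terms into one integer multiple of $2^{k+1}$ yields (ii). In case~(i) ($r=-1$, $a\ge2$): in the first sum $i=a$ has coefficient $\binom{a-1}{a-1}=1$ and exponent $0$, giving $-(-1)^aF_k(k)$, and the indices $i\le a-1$ have exponent $(a-i)(k+1)\ge k+1$, giving $-2^{k+1}F_k(k)C_1$; in the second sum the largest admissible index is $i=a-1$, whose terms sum to $-(-1)^a\sum_{j=0}^{k-1}F_j(k)\binom{k+a-j-2}{a-2}2^{k-j}$, and the indices $i\le a-2$ have exponent $(a-i)(k+1)-1-j\ge 2(k+1)-1-(k-1)=k+2$, absorbed into $2^{k+2}C_2$.

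I expect the only real difficulty to be bookkeeping, not mathematics: handling the floor functions correctly at the boundary cases $r=-1$ and $j=r$, and verifying that no discarded exponent ever falls below $k+1$ (resp.\ $k+2$). The tightest such bound, $2(k+1)-1-j\ge k+2$, and the fact that the displayed $2^{k-j}$ and $2^{k+1+r-j}$ terms are \emph{not} divisible by $2^{k+1}$, both rest on $j\le k-1$, which is exactly why those terms must be exhibited separately rather than swept into the remainder.
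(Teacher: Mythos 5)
Your proposal is correct and follows essentially the same route as the paper: substitute the explicit formula of Theorem~\ref{Bkj:theorem0B}, determine that the inner sums run to $i=a$ for $j\le r$ and to $i=a-1$ for $j>r$ (or $r=-1$), peel off the leading terms with the binomial identifications $\binom{a+r}{a-1}$, $\binom{a+r-j-1}{a-1}$, $\binom{k+a+r-j-1}{a-2}$, and absorb the remaining terms, whose exponents you correctly bound below by $k+1$ (resp.\ $k+2$), into the integer multiples of $2^{k+1}$ (resp.\ $2^{k+2}$). The bookkeeping you flag as the main difficulty is exactly what the paper's proof carries out, and your bounds match.
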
 
 \begin{proof}
 (i)
 If $r=-1$, $n = a(k+1) -1$, then using Theorem~\ref{Bkj:theorem0B},
 we have
 \begin{align*}
F_n(k) & = -  F_k(k)  
  \sum_{i=1}^{a}{ (-1)^i {{(a-i) k  +a -1} \choose {i-1}} 2^{(a-i)(k+1)}}
\\
&
+ \sum_{j=0}^{k-1} \sum_{i=1}^{a-1}{ (-1)^i F_j(k) {{(a-i)k +a -j-2} \choose {i-1}} 2^{(a-i)(k+1)-j-1}}
\\
& = -  F_k(k)  
\left(
(-1)^a + 2^{k+1} C_1
\right) 
\\
&
+ \sum_{j=0}^{k-1} F_j(k)
\left(
(-1)^{a-1}
{{k +a -j-2} \choose {a-2}} 2^{k-j}
+ 2^{2(k+1)-j-1} C
\right)
\\
& =  
-
(-1)^a F_k(k)
-  
2^{k+1} F_k(k) C_1
\\
&
- (-1)^{a} \sum_{j=0}^{k-1} F_j(k)
    {{k+ a-j-2} \choose {a-2}} 2^{k-j}  
+ 2^{k+2} C_2.
\end{align*}

 (ii)
 If $r \geq 0$,  then using Theorem~\ref{Bkj:theorem0B},
 we have  
 \begin{align*}
F_n(k) & = -  F_k(k)  
  \sum_{i=1}^{a}{ (-1)^i {{(a-i)k+a+r} \choose {i-1}} 2^{(a-i)(k+1)+r+1}}
\\
&
+ \sum_{j=0}^{k-1} F_j(k) \sum_{i=1}^{\lfloor \frac{a(k+1)+r-j}{k+1} \rfloor}{ (-1)^i   {{(a-i)k+a+r-j-1} \choose {i-1}} 2^{(a-i)(k+1)+r-j}}
\\
& = -  F_k(k)  
  \sum_{i=1}^{a}{ (-1)^i {{(a-i)k+a+r} \choose {i-1}} 2^{(a-i)(k+1)+r+1}}
\\
&
+ \sum_{j=0}^{r} F_j(k) \sum_{i=1}^{a}{ (-1)^i   {{(a-i)k+a+r-j-1} \choose {i-1}} 2^{(a-i)(k+1)+r-j}}
\\
&
+ \sum_{j=r+1}^{k-1} F_j(k) \sum_{i=1}^{a-1}{ (-1)^i   {{(a-i)k+a+r-j-1} \choose {i-1}} 2^{(a-i)(k+1)+r-j}}
\\
& = -  F_k(k)  
\left(
(-1)^a {{a+r} \choose {a-1}} 2^{r+1}
+
2^{k+r+2} C_1
\right) 
\\
&
+ \sum_{j=0}^{r} F_j(k)
\left(
(-1)^a   {{a+r-j-1} \choose {a-1}} 2^{r-j}
+ 2^{k+1+r-j} C_2
\right) 
\\
&
+ \sum_{j=r+1}^{k-1} F_j(k)
\left(
(-1)^{a-1}   {{k+a+r-j-1} \choose {a-2}} 2^{k+1+r-j}
+ 2^{2k+2+r-j} C_3
\right)
\\
& = -  (-1)^a F_k(k)  
  {{a+r} \choose {a-1}} 2^{r+1}
\\
&
+ (-1)^a \sum_{j=0}^{r} F_j(k)
    {{a+r-j-1} \choose {a-1}} 2^{r-j}
\\
&
- (-1)^{a} \sum_{j=r+1}^{k-1} F_j(k)
    {{k+a+r-j-1} \choose {a-2}} 2^{k+1+r-j}
+ 2^{k+1} C.
\end{align*}
 \end{proof}
 
\begin{theorem}
\label{Bkj:thmFn2}
Let $k \geq 2$, $n=a(k+1) + r$,
$-1 \leq r \leq k-1$.

(i) If $r=-1$ and $v_2(F_k(k))=0$  then $v_2(F_n(k))=0$.

(ii)
If $r \geq 0$
and $v_2\left( \sum_{j=0}^{r} F_j(k)
    {{a+r-j-1} \choose {a-1}} 2^{r-j} \right) \leq r$
then
$$v_2(F_n(k))=
v_2 \left( \sum_{j=0}^{r} F_j(k)
    {{a+r-j-1} \choose {a-1}} 2^{r-j}
\right).
$$
\end{theorem}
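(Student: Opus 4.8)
The plan is to read both parts off directly from the explicit congruential formulas in Theorem~\ref{Bkj:thmFn1}, using only the ultrametric property of $v_2$: if a finite sum of integers has one summand whose $2$-adic order is strictly smaller than the orders of all the other summands, then the order of the sum equals that smallest order.

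For part (i), I would start from Theorem~\ref{Bkj:thmFn1}(i), which writes $F_n(k)$ as $-(-1)^a F_k(k)$, plus $-2^{k+1}F_k(k)C_1$, plus $-(-1)^a\sum_{j=0}^{k-1}F_j(k)\binom{k+a-j-2}{a-2}2^{k-j}$, plus $2^{k+2}C_2$. Under the hypothesis $v_2(F_k(k))=0$ the leading term $-(-1)^aF_k(k)$ has $2$-adic order exactly $0$. Every remaining summand carries a factor $2^{k+1}$, a factor $2^{k-j}$ with $j\le k-1$ (hence exponent $\ge 1$), or a factor $2^{k+2}$; since $k\ge 2$, each of these has strictly positive $2$-adic order. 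Hence the leading term dominates and $v_2(F_n(k))=0$.

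For part (ii), I would use Theorem~\ref{Bkj:thmFn1}(ii), which expresses $F_n(k)$ as a sum of four pieces: the term $-(-1)^aF_k(k)\binom{a+r}{a-1}2^{r+1}$; the ``diagonal'' sum $(-1)^aD$ where $D:=\sum_{j=0}^{r}F_j(k)\binom{a+r-j-1}{a-1}2^{r-j}$; the ``tail'' sum $-(-1)^a\sum_{j=r+1}^{k-1}F_j(k)\binom{k+a+r-j-1}{a-2}2^{k+1+r-j}$; and a term $2^{k+1}C$. The first piece has $v_2\ge r+1$; in the tail sum each summand has $2$-exponent $k+1+r-j\ge r+2$ because $j\le k-1$, so $v_2\ge r+2$; and $2^{k+1}C$ has $v_2\ge k+1\ge r+2$ since $r\le k-1$. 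Thus every piece other than $(-1)^aD$ has $2$-adic order at least $r+1$. The hypothesis gives $v_2(D)\le r<r+1$, so $(-1)^aD$ strictly dominates the sum of all the other pieces, and therefore $v_2(F_n(k))=v_2(D)$, which is exactly the claimed value.

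The arithmetic here is genuinely routine once Theorem~\ref{Bkj:thmFn1} is available; the only point requiring care is that Theorem~\ref{Bkj:thmFn1} assumes $a\ge 2$, so the low-index cases $a\in\{0,1\}$ (that is, $n<2(k+1)$) would need to be handled separately, either directly from the recurrence or, in case (ii), by comparison with the explicit values in Lemma~\ref{Bkj:lemma4}. I expect this bookkeeping of small cases, rather than any conceptual difficulty, to be the main obstacle.
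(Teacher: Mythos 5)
Your proof is correct and follows the same route as the paper: both parts are read off from Theorem~\ref{Bkj:thmFn1} by absorbing every term except the dominant one into a single quantity of strictly larger $2$-adic order. Your remark about the small cases $a\in\{0,1\}$ is a fair point of care that the paper's own proof passes over silently.
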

\begin{proof}
(i) follows from Theorem~\ref{Bkj:thmFn1}(i).

(ii)
If $r \geq 0$,
by Theorem~\ref{Bkj:thmFn1}(ii),
\begin{align*}
F_n(k)
& =  2^{r+1} C
+ (-1)^a \sum_{j=0}^{r} F_j(k)
    {{a+r-j-1} \choose {a-1}} 2^{r-j},
\end{align*}
and thus,
if $v_2\left( \sum_{j=0}^{r} F_j(k)
    {{a+r-j-1} \choose {a-1}} 2^{r-j} \right) \leq r$
then
$$v_2(F_n(k))=
v_2 \left( \sum_{j=0}^{r} F_j(k)
    {{a+r-j-1} \choose {a-1}} 2^{r-j}
\right).$$
\end{proof}

We have the following corollary of
Theorem~\ref{Bkj:thmFn2}(ii).

\begin{corollary}
Let $k \geq 2$, $n=a(k+1) + r$,
$-1 \leq r \leq k-1$.

(i) If $r \geq 0$ and $v_2(F_r(k))=0$  then $v_2(F_n(k))=0$.

(ii) If $r \geq 1$, $v_2(F_r(k))=1$ 
and $v_2(a F_{r-1}(k)) > 0$
then $v_2(F_n(k))=1$.

(iii) If $r \geq 2$, $v_2(F_r(k) + 2a F_{r-1}(k)) \leq 2$ 
and $v_2\left( {{a+1} \choose {a-1}} F_{r-2}(k) \right) > 0$
then $v_2(F_n(k))=v_2(F_r(k) + 2a F_{r-1}(k))$.

(iv) If $r \geq i$, $v_2\left(F_r(k) + 2a F_{r-1}(k) + 2^2 {{a+1} \choose {a-1}} F_{r-2}(k) + \dots + 2^{i-1} {{a+i-2} \choose {a-1}} F_{r-i+1}(k) \right) \leq i$ 
and $v_2\left( {{a+i-1} \choose {a-1}} F_{r-i}(k) \right) > 0$
then \\
$v_2(F_n(k))=
v_2\left(F_r(k) + 2a F_{r-1}(k) + 2^2 {{a+1} \choose {a-1}} F_{r-2}(k) + \dots + 2^{i-1} {{a+i-2} \choose {a-1}} F_{r-i+1}(k) \right)$.
\end{corollary}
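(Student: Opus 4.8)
The plan is to deduce all four parts simultaneously from Theorem~\ref{Bkj:thmFn2}(ii) by writing the sum occurring there in a transparent form. Re-indexing the sum $\sum_{j=0}^{r}F_j(k){{a+r-j-1}\choose{a-1}}2^{r-j}$ of Theorem~\ref{Bkj:thmFn2}(ii) with $\ell=r-j$,
\begin{align*}
\Sigma := \sum_{j=0}^{r}F_j(k){{a+r-j-1}\choose{a-1}}2^{r-j}
&= \sum_{\ell=0}^{r}2^{\ell}{{a+\ell-1}\choose{a-1}}F_{r-\ell}(k)\\
&= F_r(k)+2a\,F_{r-1}(k)+2^{2}{{a+1}\choose{a-1}}F_{r-2}(k)+\dots+2^{r}{{a+r-1}\choose{a-1}}F_0(k).
\end{align*}
With this notation Theorem~\ref{Bkj:thmFn2}(ii) says: if $v_2(\Sigma)\le r$ then $v_2(F_n(k))=v_2(\Sigma)$. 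So each part of the corollary amounts to computing $v_2(\Sigma)$ by cutting off an initial block of this sum and showing the remaining tail has strictly larger $2$-adic order.

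I would prove part (iv) first, since it implies (ii) and (iii). Fix $i$ with $1\le i\le r$ and write $\Sigma=P+M+Q$, where $P=\sum_{\ell=0}^{i-1}2^{\ell}{{a+\ell-1}\choose{a-1}}F_{r-\ell}(k)$ is exactly the expression in the statement, $M=2^{i}\cdot\big({{a+i-1}\choose{a-1}}F_{r-i}(k)\big)$ is the next term, and $Q=\sum_{\ell=i+1}^{r}2^{\ell}{{a+\ell-1}\choose{a-1}}F_{r-\ell}(k)$ is the (possibly empty) tail. Every summand of $Q$ is divisible by $2^{i+1}$, so $2^{i+1}\mid Q$; and $2^{i+1}\mid M$ since the second factor of $M$ is even by the hypothesis $v_2\big({{a+i-1}\choose{a-1}}F_{r-i}(k)\big)>0$. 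Hence $v_2(M+Q)\ge i+1$, whereas $v_2(P)\le i$ by hypothesis, so by the ultrametric property $v_2(x+y)=v_2(x)$ whenever $v_2(x)<v_2(y)$ (applied with $x=P$, $y=M+Q$) we get $v_2(\Sigma)=v_2(P)\le i\le r$. Theorem~\ref{Bkj:thmFn2}(ii) now applies and gives $v_2(F_n(k))=v_2(P)$, which is (iv); parts (ii) and (iii) are the special cases $i=1$ and $i=2$. Part (i) is immediate: every term of $\Sigma$ after $F_r(k)$ is even, so $v_2(\Sigma)=v_2(F_r(k))=0\le r$, and Theorem~\ref{Bkj:thmFn2}(ii) gives $v_2(F_n(k))=0$ (when $r=0$ one simply has $\Sigma=F_0(k)$).

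The proof is essentially bookkeeping and I do not expect a real obstacle; the only places calling for care are (a) the re-indexing $\ell=r-j$, so that the binomials and initial values in the corollary match those in Theorem~\ref{Bkj:thmFn2}(ii); (b) the use of $r\ge i$, which is what guarantees both that $F_{r-i}(k)$ has a legitimate nonnegative index and that $Q$ is a genuine (possibly empty) remainder rather than overlapping $P$ or $M$; and (c) verifying that the hypotheses $v_2(P)\le i$ and $r\ge i$ combine to give precisely the inequality $v_2(\Sigma)\le r$ demanded by Theorem~\ref{Bkj:thmFn2}(ii). If anything is worth spelling out explicitly it is the ultrametric step and the observation that a sum of multiples of $2^{i+1}$ is again a multiple of $2^{i+1}$.
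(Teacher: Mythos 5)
Your proposal is correct, and it fills in exactly the derivation the paper intends: the corollary is stated as an immediate consequence of Theorem~\ref{Bkj:thmFn2}(ii) with no written proof, and your reindexing $\ell=r-j$ plus the ultrametric cut-off of the tail at the first even term is the natural (and evidently intended) way to obtain all four parts. The only point worth noting is that, like the paper's own statements, your argument implicitly inherits the $a\geq 2$ hypothesis from Theorem~\ref{Bkj:thmFn1}(ii); this is not a defect of your proof relative to the paper.
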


\section{The $k=3$ case}

\subsection{k=3, j=0}

\begin{theorem}
\label{Bkj:theoremk3j0}
For any $n \geq 0$,
$$
v_2(B_n(3,0))
=
\begin{cases}
0, 
& \text{if } 
n \equiv 0 \pmod{4},
\\
1, 
& \text{if } 
n \equiv 5 \pmod{8},
\\
3, 
& \text{if } 
n \equiv 9 \pmod{16},
\\
1+
v_2 \big( a(a+1) \big), 
& \text{if } 
n = 4a  + 2
\text{ and }
v_2 \big( a(a+1) \big)
\leq 4,
\\
0, 
& \text{if } 
n \equiv 3 \pmod{4}.
  \end{cases}
$$
\end{theorem}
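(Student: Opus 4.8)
The plan is to obtain everything from the general results of Section~3 applied with $k=3$ and $j=0$. Write $n=a(k+1)+r=4a+r$ with $-1\le r\le 2$. Then $n\equiv 0\pmod 4$ is the case $r=0$, $n\equiv 3\pmod 4$ is $r=-1$, $n\equiv 1\pmod 4$ is $r=1$, and $n=4a+2$ is $r=2$; moreover, inside $r=1$, ``$a$ odd'' is the same as $n\equiv 5\pmod 8$, while $v_2(a)=1$ is the same as $n\equiv 9\pmod{16}$. So the five clauses of the statement correspond, in order, to $r=0$; to $r=1$ with $a$ odd; to $r=1$ with $a$ even and $v_2(a)=1$; to $r=2$; and to $r=-1$, and I shall dispose of them in that grouping.

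The cases $r=0$ and $r=-1$ are immediate from Theorem~\ref{Bkj:theorem4}(i): when $r=0=j$ or $r=-1$ that theorem gives $v_2(B_n(3,0))=0$. The case $r=1$ with $a$ odd is exactly Theorem~\ref{Bkj:theorem5}(v) (with $k=3$, $j=0$), which gives $v_2(B_n(3,0))=1$.

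For $r=2$ we have $j+1\le r\le k-1$, so Theorem~\ref{Bkj:theorem4}(iii) is the relevant tool once $\Delta_3,\Delta_4>0$ have been checked. Using ${a+1\choose a-1}={a+1\choose 2}=\frac{a(a+1)}{2}$ and ${a+2\choose a-1}={a+2\choose 3}$ together with Lemma~\ref{Bkj:lemma1}, a short computation gives $\Delta_3=(r+k+1-j)-v_2\!\left({a+1\choose a-1}2^{r-j}\right)=5-v_2\!\left(a(a+1)\right)$ and $\Delta_4=v_2\!\left({a+2\choose a-1}2^{r+1}\right)-v_2\!\left({a+1\choose a-1}2^{r-j}\right)=1+v_2(a+2)$. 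Thus $\Delta_4>0$ always, and $\Delta_3>0$ is precisely the hypothesis $v_2(a(a+1))\le 4$; Theorem~\ref{Bkj:theorem4}(iii) then yields $v_2(B_n(3,0))=r-j+v_2\!\left({a+1\choose a-1}\right)=2+\left(v_2(a(a+1))-1\right)=1+v_2(a(a+1))$.

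The remaining clause, $r=1$ with $a$ even, is the one where Theorem~\ref{Bkj:theorem4} is not enough: for $r=1$, $j=0$ one computes $\Delta_4=v_2(a+1)$, which is $0$ when $a$ is even, so part (iii) does not apply. This is exactly what Lemma~\ref{Bkj:lemma7} is designed for; its hypothesis $v_2(a)\le k/2=3/2$ forces $v_2(a)=1$ (as $a$ is even), i.e.\ $n\equiv 9\pmod{16}$, and then Lemma~\ref{Bkj:lemma7} gives $v_2(B_n(3,0))=1+2v_2(a)=3$. The work is therefore almost entirely bookkeeping --- translating each congruence class of $n$ into the right pair $(a,r)$ and citing the right earlier result --- and the only genuine subtlety, which is the reason for splitting off $n\equiv 9\pmod{16}$, is that $\Delta_4$ vanishes there, so one must leave Theorem~\ref{Bkj:theorem4} for the supplementary Lemma~\ref{Bkj:lemma7}, whose restriction $v_2(a)\le k/2$ is exactly what confines this clause to $v_2(a)=1$.
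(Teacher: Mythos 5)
Your proposal is correct and follows essentially the same route as the paper: Theorem~\ref{Bkj:theorem4}(i) for $r\in\{-1,0\}$, Theorem~\ref{Bkj:theorem5}(v) for $r=1$ with $a$ odd, Lemma~\ref{Bkj:lemma7} for $n\equiv 9\pmod{16}$, and Theorem~\ref{Bkj:theorem4}(iii) with the same computation $\Delta_3=5-v_2\big(a(a+1)\big)$, $\Delta_4=1+v_2(a+2)$ for $r=2$. Your added remark that $\Delta_4=v_2(a+1)$ vanishes for $r=1$, $j=0$, $a$ even — explaining why the supplementary Lemma~\ref{Bkj:lemma7} is needed there — is a correct and useful observation that the paper leaves implicit.
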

\begin{proof}
When $n \equiv 0 \pmod{4}$,  it
follows from
Theorem~\ref{Bkj:theorem4}(i)
with $k=3$ and $j=0$
that
$
v_2(B_n(3,0)) = 0$.

When $n \equiv 5 \pmod{8}$, it
follows from
Theorem~\ref{Bkj:theorem5}(v) with $k=3$ and $j=0$
that $
v_2(B_n(3,0))
=
1
$.

When $n \equiv 9 \pmod{16}$,  it
follows from
Lemma~\ref{Bkj:lemma7}
with $k=3$ and $j=0$
that
$
v_2(B_n(3,0))
=
3
$.

When $n = 4a  + 2$,
we apply Theorem~\ref{Bkj:theorem4}(iii)
with $k=3$, $j=0$ and $r=2$.
In this case, 
$\Delta_3
= 
6-
v_2 \left( 
{{a+ 1} \choose {a-1}}
2^{2}
\right)
= 
5-
v_2 \big( a(a+1) \big)
$
and
$
\Delta_4
=
1+
v_2(a+2)
$.
So if 
$v_2 \big( a(a+1) \big)
\leq 4$,
then
$\Delta_3 >0$ and $\Delta_4 >0$,
and thus by Theorem~\ref{Bkj:theorem4}(iii),
$
v_2(B_n(3,0))
=
1+
v_2 \big( 
a(a+1)
\big)
$.

When $n \equiv 3 \pmod{4}$, it
follows from
Theorem~\ref{Bkj:theorem4}(i)
with $k=3$ and $j=0$
that
$
v_2(B_n(3,0)) = 0$.
\end{proof}

\subsection{k=3, j=1}

\begin{theorem}
\label{Bkj:theoremk3j1}
For any $n \geq 0$,
$$
v_2(B_n(3,1))
=
\begin{cases}
1 + v_2(a), 
& \text{if } 
n=4a
\text{ and }
v_2(a) \leq 4,
\\
0, 
& \text{if } 
n \equiv 1 \pmod{2},
\\
1 + v_2(a), 
& \text{if } 
n=4a+2
\text{ and }
v_2(a) \leq 3
.
  \end{cases}
$$
\end{theorem}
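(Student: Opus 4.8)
The plan is to specialize the machinery of Section~3 to $k=3$, $j=1$ and to split the argument according to the residue $r$ of $n$ modulo $k+1=4$. Writing $n=4a+r$ with $-1\le r\le k-1=2$, the case $r=0$ is $n=4a$, the case $r=2$ is $n=4a+2$, and the cases $r\in\{-1,1\}$ together give the odd values $n\equiv 3$ and $n\equiv 1\pmod 4$. Each of the three lines of the claimed formula will be obtained by quoting a result already established, so the proof is essentially a verification that the hypotheses of those results hold for the present parameters.

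First I would dispose of the odd case. If $r=-1$, Theorem~\ref{Bkj:theorem4}(i) applies immediately. If $r=1$, then since $j=1$ we have $r=j$, which is the second alternative in Theorem~\ref{Bkj:theorem4}(i); hence in both cases $v_2(B_n(3,1))=0$, which is the middle line of the statement.

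Next, for $n=4a+2$ we have $r=2=j+1=k-1$, so Theorem~\ref{Bkj:theorem5}(vi) applies with $k=3$: its hypotheses $j\ge1$ and $v_2(a)\le k=3$ are precisely the restriction stated, and its conclusion is $v_2(B_n(3,1))=1+v_2(a)$, the last line. For $n=4a$ I would argue in three sub-cases according to $v_2(a)$. If $a$ is odd, Theorem~\ref{Bkj:theorem5}(i) (valid since $j=1\ge1$) gives $v_2(B_n(3,1))=1=1+v_2(a)$. If $a$ is even with $v_2(a)\le k=3$, then $1\le j=1\le k-2=1$, so Theorem~\ref{Bkj:theorem5}(ii) applies and gives $v_2(B_n(3,1))=1+v_2(a)$. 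The only value of $v_2(a)$ with $v_2(a)\le 4$ not yet covered is $v_2(a)=4=k+1$, and this is exactly the hypothesis of Lemma~\ref{Bkj:lemma5} (with $1\le j=1<k-1=2$), which yields $v_2(B_n(3,1))=k+2=5=1+v_2(a)$. Assembling these three sub-cases gives $v_2(B_{4a}(3,1))=1+v_2(a)$ for all $a$ with $v_2(a)\le 4$, the first line.

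I do not expect a genuine obstacle here; the entire content is bookkeeping. The one point to watch is that the boundary value $v_2(a)=k+1$ in the $n=4a$ case falls outside the range $v_2(a)\le k$ of Theorem~\ref{Bkj:theorem5}(ii) and must be handled separately by Lemma~\ref{Bkj:lemma5}, and that for odd $n$ the residue $r=1$ has to be read as the instance $r=j$ of Theorem~\ref{Bkj:theorem4}(i) rather than any of the $r=1$ branches of Theorem~\ref{Bkj:theorem5}, which all assume $j\ge2$ or $j=0$ and so do not apply when $j=1$.
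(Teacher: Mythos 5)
Your proposal is correct and follows essentially the same route as the paper's own proof: Theorem~\ref{Bkj:theorem4}(i) for odd $n$ (via $r=-1$ and $r=j=1$), Theorem~\ref{Bkj:theorem5}(i) and (ii) plus Lemma~\ref{Bkj:lemma5} for $n=4a$ split by $v_2(a)$, and Theorem~\ref{Bkj:theorem5}(vi) for $n=4a+2$. You also correctly flag the two delicate points (the boundary case $v_2(a)=4$ and the fact that $r=1$ must be treated as $r=j$), which the paper handles the same way.
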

\begin{proof}
When $n=4a$
and $a$ is odd,
it follows from Theorem~\ref{Bkj:theorem5}(i)
with $k=3$, $j=1$ and $r=0$ that
$
v_2(B_n(3,1))
=
1
$.

When $n=4a$
and $a$ is even,
it follows from Theorem~\ref{Bkj:theorem5}(ii)
with $k=3$, $j=1$ and $r=0$ that
if $v_2(a) \leq 3$ then
$
v_2(B_n(3,1))
=
1+
v_2(a)
$.

When $n=4a$ and $v_2(a) =4$,
it follows from
Lemma~\ref{Bkj:lemma5}
with $k=3$ and $j=1$ that
$
v_2(B_n(3,1))
=
5
.$

When $n \equiv 1 \pmod{2}$,
it follows from
Theorem~\ref{Bkj:theorem4}(i)
with $k=3$ and $j=1$ that $
v_2(B_n(3,1)) = 0$.

When $n=4a+2$,
it follows from Theorem~\ref{Bkj:theorem5}(vi)
with $k=3$, $j=1$ and $r=2$ that
if $v_2(a) \leq 3$ then
$
v_2(B_n(3,1))
=
1 + v_2(a)
$.
\end{proof}

\subsection{k=3, j=2}

\begin{theorem}
\label{Bkj:theoremk3j2}
For any $n \geq 0$,
$$
v_2(B_n(3,2))
=
\begin{cases}
1, 
& \text{if } 
n \equiv 4 \pmod{8}
,
\\
3, 
& \text{if } 
n \equiv 8 \pmod{16}
,
\\
1 + v_2 \big( a(a+1) \big), 
& \text{if } 
n=4a+1 \text{ and }
 v_2\big( a(a+1) \big) \leq 4,
 \\
 0, 
& \text{if } 
n \equiv 2 \pmod{4},
\\
0, 
& \text{if } 
n \equiv 3 \pmod{4}.
  \end{cases}
$$
\end{theorem}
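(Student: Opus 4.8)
The plan is to fix $k=3$ and $j=2$ throughout, write $n=4a+r$ with $-1\le r\le 2$ as in Theorem~\ref{Bkj:theorem4}, and settle each of the five residue classes of $n$ by quoting the appropriate result from Section~3. First I would clear the two cases with $v_2(B_n(3,2))=0$: the class $n\equiv 3\pmod 4$ is exactly $r=-1$, and $n\equiv 2\pmod 4$ is exactly $r=j=2$, so Theorem~\ref{Bkj:theorem4}(i) applies verbatim in both.

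Next I would treat $n\equiv 4\pmod 8$, which is the case $r=0$ with $a$ odd: Theorem~\ref{Bkj:theorem5}(i), applied with $k=3$, $j=2$, $r=0$, gives $v_2(B_n(3,2))=1$, the value $a=1$ (i.e., $n=4$) being subsumed there through Lemma~\ref{Bkj:lemma4}(i). Then I would handle $n=4a+1$, which is $r=1$ with $j=2\ge 2$; here I split on the parity of $a$. When $a$ is odd we have $v_2\big(a(a+1)\big)=v_2(a+1)$ and Theorem~\ref{Bkj:theorem5}(iii) applies; when $a$ is even we have $v_2\big(a(a+1)\big)=v_2(a)$ and Theorem~\ref{Bkj:theorem5}(iv) applies. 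In either subcase the hypothesis $v_2\big(a(a+1)\big)\le 4$ is precisely the condition ($v_2(a+1)\le k+1$, respectively $v_2(a)\le k+1$) required by those parts, and both conclusions read $v_2(B_n(3,2))=1+v_2\big(a(a+1)\big)$.

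The remaining class, $n\equiv 8\pmod{16}$, is where I expect the only genuine obstacle. It is the case $r=0$ with $a$ even and $v_2(a)=1$, and here Theorem~\ref{Bkj:theorem5}(ii) is \emph{not} applicable, since it demands $j\le k-2$ whereas $j=2=k-1$. Instead I would invoke Lemma~\ref{Bkj:lemma6}, the supplementary lemma tailored to the boundary $j=k-1$: its hypothesis $v_2(a)\le k/2=3/2$ holds exactly when $v_2(a)=1$, and it yields $v_2(B_n(3,2))=1+2v_2(a)=3$. Collecting the five cases finishes the proof. As in Theorems~\ref{Bkj:theoremk3j0} and~\ref{Bkj:theoremk3j1}, the statement is deliberately silent on the leftover values $n\equiv 0\pmod{16}$, $n=4a+1$ with $16\mid a(a+1)$, and $n=0$, which are not claimed.
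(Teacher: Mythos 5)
Your proposal is correct and follows essentially the same route as the paper: Theorem~\ref{Bkj:theorem4}(i) for the two zero cases ($r=-1$ and $r=j=2$), Theorem~\ref{Bkj:theorem5}(i) for $n\equiv 4\pmod 8$, Theorem~\ref{Bkj:theorem5}(iii)/(iv) split on the parity of $a$ for $n=4a+1$, and Lemma~\ref{Bkj:lemma6} for $n\equiv 8\pmod{16}$. Your observation that Theorem~\ref{Bkj:theorem5}(ii) is inapplicable at the boundary $j=k-1$, forcing the use of Lemma~\ref{Bkj:lemma6}, is exactly the point the paper's proof relies on.
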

\begin{proof}
When $n \equiv 4 \pmod{8}$,
it follows from Theorem~\ref{Bkj:theorem5}(i)
with $k=3$, $j=2$ and $r=0$
that $v_2(B_n(3,2))=1$.

When $n \equiv 8 \pmod{16}$,
it follows from Lemma~\ref{Bkj:lemma6}
with $k=3$ and $j=2$
that $v_2(B_n(3,2))=3$.

When $n = 4a+1$ and $a$ is odd,
it follows from Theorem~\ref{Bkj:theorem5}(iii)
with $k=3$, $j=2$ and $r=1$
that if 
$v_2(a+1) \leq 4$,
then
$
v_2(B_n(3,2))
=
1+
v_2(a+1)
$.

When $n = 4a+1$ and $a$ is even,
it follows from Theorem~\ref{Bkj:theorem5}(iv)
with $k=3$, $j=2$ and $r=1$
that if $v_2(a) \leq 4$,
then
$
v_2(B_n(3,2))
=
1+
v_2(a)
$.

When $n \equiv 2 \pmod{4}$,
it follows from Theorem~\ref{Bkj:theorem4}(i)
with $k=3$, $j=2$ and $r=2$
that $v_2(B_n(3,2)) = 0$.

When $n \equiv 3 \pmod{4}$,
it follows from Theorem~\ref{Bkj:theorem4}(i)
with $k=3$, $j=2$ and $r=-1$
that $v_2(B_n(3,2)) = 0$.
\end{proof}

\section{The $k=4$ case}
\subsection{k=4, j=0}

\begin{theorem}
\label{Bkj:theoremk4j0}
For any $n \geq 0$,
$$
v_2(B_n(4,0))
=
\begin{cases}
0, 
& \text{if } 
n \equiv 0 \pmod{5},
\\
1 + 2 v_2(a), 
& \text{if } 
n=5a + 1
\text{ and } 
v_2(a) \leq 2
\\
1+ v_2\big(a(a+1)\big), 
& \text{if } 
n=5a + 2
\text{ and }
v_2\big(a(a+1)\big) \leq 5,
\\
3, 
& \text{if } 
n=5a + 3
\text{ and }
v_2(a+3) \geq 2,
\\
0, 
& \text{if } 
n \equiv 4 \pmod{5}.
  \end{cases}
$$
\end{theorem}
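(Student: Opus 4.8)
The plan is to reduce each of the five cases to results already established earlier in the paper, exactly in the style of the proofs of Theorems~\ref{Bkj:theoremk3j0}--\ref{Bkj:theoremk3j2}, by writing $n = 5a + r$ with $k=4$ and $j=0$ and inspecting the residue $r \in \{-1,0,1,2,3\}$. For $r=0$ (i.e. $n \equiv 0 \pmod 5$) and $r=-1$ (i.e. $n\equiv 4\pmod 5$), apply Theorem~\ref{Bkj:theorem4}(i) directly to get $v_2(B_n(4,0))=0$. For $r=3$, note $j+1 = 1 \le r \le k-1 = 3$, so Theorem~\ref{Bkj:theorem4}(iii) applies with the relevant $\Delta_3,\Delta_4$; here the binomial is ${a+r-j-1 \choose a-1} = {a+2 \choose a-1}$ and one computes $v_2\big({a+2\choose a-1}\big) = v_2\big(\tfrac{(a+2)(a+1)a}{6}\big)$, plus the factor $2^{r-j}=2^3$, giving the base contribution; then one checks $\Delta_3>0$ and $\Delta_4>0$ precisely when $v_2(a+3)\ge 2$ and reads off $v_2(B_n(4,0))=3$ (the exponent $r-j=3$ dominating once the binomial contributes nothing $2$-adically).

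For $r=1$ with $j=0$, this is the case supplementary to Theorem~\ref{Bkj:theorem5}(v): when $a$ is odd, Theorem~\ref{Bkj:theorem5}(v) gives $v_2(B_n(4,0))=1=1+2v_2(a)$ since $v_2(a)=0$; when $a$ is even, Lemma~\ref{Bkj:lemma7} with $k=4$ gives $v_2(B_n(4,0))=1+2v_2(a)$, valid as stated for $v_2(a)\le k/2 = 2$. For $r=2$ with $j=0$, apply Theorem~\ref{Bkj:theorem4}(iii) with $k=4$, $j=0$, $r=2$ (so $j+1=1\le r=2\le k-1=3$): the binomial is ${a+r-j-1\choose a-1} = {a+1\choose a-1} = \tfrac{a(a+1)}{2}$, so with the factor $2^{r-j}=2^2$ the base 2-adic value is $1 + v_2\big(a(a+1)\big)$; then verify $\Delta_3 = (r+k+1-j) - v_2\big({a+1\choose a-1}2^2\big) = 7 - \big(2 + v_2(\tfrac{a(a+1)}{2})\big) = 6 - v_2\big(a(a+1)\big) > 0$ exactly when $v_2\big(a(a+1)\big)\le 5$, and $\Delta_4 = v_2\big({a+r\choose a-1}2^{r+1}\big) - v_2\big({a+1\choose a-1}2^2\big) = v_2\big({a+2\choose a-1}2^3\big) - v_2\big({a+1\choose a-1}2^2\big)$, which simplifies (using $v_2\big({a+2\choose a-1}\big) - v_2\big({a+1\choose a-1}\big) = v_2(a+2)-v_2(3) = v_2(a+2)$) to $1 + v_2(a+2) > 0$; hence Theorem~\ref{Bkj:theorem4}(iii) yields $v_2(B_n(4,0)) = r-j + v_2\big({a+1\choose a-1}\big) = 2 + v_2(\tfrac{a(a+1)}{2}) = 1 + v_2\big(a(a+1)\big)$.

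The routine obstacle throughout is the bookkeeping of the small binomial coefficients ${a+1\choose a-1}$, ${a+2\choose a-1}$ and the $v_2$ of the fixed denominators $2,6$, together with checking the $\Delta$-positivity inequalities against the stated bounds on $v_2(a)$, $v_2(a+1)$, $v_2(a+3)$; these are the same elementary manipulations used in the $k=3$ theorems and in Lemma~\ref{Bkj:lemma7}. The only genuinely substantive input is Lemma~\ref{Bkj:lemma7} for the even-$a$ subcase of $r=1$, which handles the part where $\Delta_3$ fails to be positive and the two dominant terms in Theorem~\ref{Bkj:theorem2} partially cancel to produce the $-2a^2$ term responsible for the $1+2v_2(a)$ behaviour; everything else follows mechanically from Theorem~\ref{Bkj:theorem4} by case analysis on $r$.
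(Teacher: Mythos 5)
Your proposal is correct and follows essentially the same route as the paper: both handle $r\in\{0,-1\}$ via Theorem~\ref{Bkj:theorem4}(i), the $r=1$ case via Theorem~\ref{Bkj:theorem5}(v) (odd $a$) together with Lemma~\ref{Bkj:lemma7} (even $a$), and the $r=2,3$ cases via Theorem~\ref{Bkj:theorem4}(iii) with the same $\Delta_3,\Delta_4$ computations ($\Delta_3=6-v_2(a(a+1))$, $\Delta_4=1+v_2(a+2)$ for $r=2$; $\Delta_4=v_2(a+3)-1$ for $r=3$). The only detail left implicit in your $r=3$ case, which the paper spells out, is that $v_2(a+3)\ge 2$ forces $a\equiv 1\pmod 4$, hence $v_2\bigl(a(a+1)(a+2)\bigr)=v_2(a+1)=1$, so the binomial $\binom{a+2}{a-1}$ is indeed odd and $\Delta_3=5>0$ automatically.
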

\begin{proof}
When $n \equiv 0 \pmod{5}$,
it follows from Theorem~\ref{Bkj:theorem4}(i)
with $k=4$, $j=0$ and $r=0$
that $v_2(B_n(4,0))=0$.

When $n=5a + 1$,
$a$ is odd,
it follows from Theorem~\ref{Bkj:theorem5}(v)
with $k=4$, $j=0$ and $r=1$
that $v_2(B_n(4,0))=1$.

When $n=5a + 1$,
$a$ is even and $v_2(a) \leq 2$,
it follows from
Lemma~\ref{Bkj:lemma7}
that 
$
v_2(B_n(4,0))
=
1 + 2 v_2(a)
$.

When $n=5a + 2$,
apply Theorem~\ref{Bkj:theorem4}(iii)
with $k=4$, $j=0$ and $r=2$.
In this case, $\Delta_3= 
7-
v_2 \left( 
{{a+ 1} \choose {a-1}}
2^{2}
\right)
=
6-
v_2\big(a(a+1)\big)
$
and
$
\Delta_4=
v_2 \left( 
{{a+ 2} \choose {a-1}}
2^{3}
\right)
-
v_2 \left( 
{{a+ 1} \choose {a-1}}
2^{2}
\right) 
=
1+
v_2(a+2)
$.
So if 
$v_2\big(a(a+1)\big)  \leq 5$
then
$\Delta_3 >0$ and $\Delta_4 >0$,
and thus
by Theorem~\ref{Bkj:theorem4}(iii),
$
v_2(B_n(4,0))
=
2+
v_2 \left( 
{{a+ 1} \choose {a-1}}
\right)
=
1+
v_2\big(a(a+1)\big) 
$.

When $n=5a + 3$,
apply Theorem~\ref{Bkj:theorem4}(iii)
with $k=4$, $j=0$ and $r=3$.
In this case,
$\Delta_3= 
8-
v_2 \left( 
{{a+ 2} \choose {a-1}}
2^{3}
\right)
=
6-
v_2 \big( 
a(a+1)(a+2)
\big)
$
and
$
\Delta_4=
v_2 \left( 
{{a+ 3} \choose {a-1}}
2^{4}
\right)
-
v_2 \left( 
{{a+ 2} \choose {a-1}}
2^{3}
\right) 
= 
v_2(a+3)  
-1
$.
So if 
$v_2(a+3) \geq 2$
then $\Delta_4 >0$ and
$\Delta_3 =
6-
v_2(a+1)=5>0$,
and thus
by Theorem~\ref{Bkj:theorem4}(iii),
$
v_2(B_n(4,0))
=
3+
v_2 \left( 
{{a+ 2} \choose {a-1}}
\right)
=
2+
v_2 \big(
a(a+1)(a+2) 
\big)
=
2+
v_2(a+1)=3
$.

When $n \equiv 4 \pmod{5}$,
it follows from Theorem~\ref{Bkj:theorem4}(i)
with $k=4$, $j=0$ and $r=-1$
that $v_2(B_n(4,0))=0$.
\end{proof}

\subsection{k=4, j=1}

\begin{theorem}
\label{Bkj:theoremk4j1}
For any $n \geq 0$,
$$
v_2(B_n(4,1))
=
\begin{cases}
1+v_2(a), 
& \text{if } 
n=5a 
\text{ and } 
v_2(a) \leq 5,
\\
0, 
& \text{if } 
n \equiv 1 \pmod{5},
\\
1+ v_2(a), 
& \text{if } 
n=5a + 2
\text{ and }
v_2(a) \leq 4,
\\
1+
v_2 \big( 
a(a+1)
\big), 
& \text{if } 
n=5a + 3
\text{ and }
v_2 \big( 
a(a+1)
\big) \leq 5,
\\
0, 
& \text{if } 
n \equiv 4 \pmod{5}.
  \end{cases}
$$
\end{theorem}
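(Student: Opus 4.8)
The plan is to run a case analysis on the residue of $n$ modulo $k+1 = 5$, writing $n = 5a + r$ with $-1 \le r \le k-1 = 3$, exactly as in the proofs of Theorems~\ref{Bkj:theoremk3j1} and~\ref{Bkj:theoremk4j0}. Each of the five rows of the claimed table corresponds to one value of $r$, and for each I would quote the relevant specialization of the general results established earlier.

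The two rows giving $v_2(B_n(4,1)) = 0$ are immediate: $n \equiv 1 \pmod 5$ is the case $r = 1 = j$, and $n \equiv 4 \pmod 5$ is the case $r = -1$, so Theorem~\ref{Bkj:theorem4}(i) applies in both. For the row $n = 5a$ (so $r = 0$) I would assemble three inputs: Theorem~\ref{Bkj:theorem5}(i) for $a$ odd, giving $v_2 = 1 = 1 + v_2(a)$; Theorem~\ref{Bkj:theorem5}(ii) for $a$ even with $v_2(a) \le k = 4$, whose hypothesis $1 \le j \le k-2$ here reads $1 \le 1 \le 2$, giving $v_2 = 1 + v_2(a)$; and Lemma~\ref{Bkj:lemma5} for the single remaining value $v_2(a) = 5 = k+1$, whose hypothesis $1 \le j < k-1$ reads $1 \le 1 < 3$, giving $v_2 = k+2 = 6 = 1 + v_2(a)$. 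For the row $n = 5a + 2$ note that $r = 2 = j + 1$, so Theorem~\ref{Bkj:theorem5}(vi) applies whenever $v_2(a) \le k = 4$ and directly yields $v_2 = 1 + v_2(a)$.

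The one row that needs real computation is $n = 5a + 3$, where $j+1 = 2 \le r = 3 \le k-1 = 3$, so I would invoke Theorem~\ref{Bkj:theorem4}(iii). Using ${a+r-j-1 \choose a-1} = {a+1 \choose 2} = \tfrac{a(a+1)}{2}$ and ${a+r \choose a-1} = {a+3 \choose 4}$, a short valuation calculation from Definition~\ref{Bkj:def2}(ii) gives $\Delta_3 = (r+k+1-j) - v_2\!\big(\tfrac{a(a+1)}{2}\,2^2\big) = 7 - \big(v_2(a(a+1)) + 1\big) = 6 - v_2(a(a+1))$ and $\Delta_4 = v_2\!\big({a+3 \choose 4}\,2^4\big) - v_2\!\big(\tfrac{a(a+1)}{2}\,2^2\big) = v_2\big((a+2)(a+3)\big)$. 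The decisive point is that $(a+2)(a+3)$ is a product of two consecutive integers, hence even, so $\Delta_4 \ge 1 > 0$ holds automatically; the only genuine constraint is therefore $\Delta_3 > 0$, i.e.\ $v_2(a(a+1)) \le 5$, which is precisely the hypothesis of the fourth row. Under it, Theorem~\ref{Bkj:theorem4}(iii) gives $v_2(B_n(4,1)) = (r-j) + v_2\!\big(\tfrac{a(a+1)}{2}\big) = 2 + v_2(a(a+1)) - 1 = 1 + v_2(a(a+1))$, as claimed.

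The small-$a$ values need no separate attention: $a = 0$ is excluded from every row by the stated valuation conditions (as $v_2(0) = \infty$), and $a = 1$ is absorbed by the quoted results themselves (Theorem~\ref{Bkj:theorem5}(i) handles it through Lemma~\ref{Bkj:lemma4}, while Theorem~\ref{Bkj:theorem4}(iii) imposes no lower bound on $a$). The only real work is the valuation bookkeeping for the binomials ${a+1 \choose 2}$ and ${a+3 \choose 4}$ in the $r=3$ case; once $\Delta_3$ and $\Delta_4$ are brought to the above form, the observation $\Delta_4 = v_2((a+2)(a+3)) > 0$ is what reduces the hypotheses to the single condition $v_2(a(a+1)) \le 5$.
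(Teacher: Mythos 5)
Your proposal is correct and follows essentially the same route as the paper: a case split on $r$ with $n=5a+r$, citing Theorem~\ref{Bkj:theorem4}(i) for $r=-1$ and $r=j=1$, Theorem~\ref{Bkj:theorem5}(i),(ii) plus Lemma~\ref{Bkj:lemma5} for $r=0$, Theorem~\ref{Bkj:theorem5}(vi) for $r=2$, and a direct $\Delta_3,\Delta_4$ computation via Theorem~\ref{Bkj:theorem4}(iii) for $r=3$, where your values $\Delta_3 = 6 - v_2\big(a(a+1)\big)$ and $\Delta_4 = v_2\big((a+2)(a+3)\big) > 0$ match the paper's. Your explicit remark that $\Delta_4>0$ holds automatically (two consecutive integers) and your note on small $a$ are slight clarifications of points the paper leaves implicit.
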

\begin{proof}
When $n=5a$ and $a$ is odd,
it follows from Theorem~\ref{Bkj:theorem5}(i)
with $k=5$, $j=1$ and $r=0$
that $v_2(B_n(4,1))=1$.

When $n=5a$ and $a$ is even,
it follows from Theorem~\ref{Bkj:theorem5}(ii)
with $k=5$, $j=1$ and $r=0$
that if $v_2(a) \leq 4$, then $v_2(B_n(4,1))=1+v_2(a)$.

When $n=5a$ and $v_2(a) =5$,
it follows from 
Lemma~\ref{Bkj:lemma5} that $v_2(B_n(4,1))=6$.

When $n \equiv 1 \pmod{5}$,
it follows from Theorem~\ref{Bkj:theorem4}(i)
that $v_2(B_n(4,1)) = 0$.

When $n = 5a + 2$,
it follows from Theorem~\ref{Bkj:theorem5}(vi)
with $k=4$, $j=1$ and $r=2$ that
if $v_2(a) \leq 4$, then
$
v_2(B_n(4,1))
=
1 + v_2(a)
$.

When $n=5a + 3$,
apply Theorem~\ref{Bkj:theorem4}(iii)
with $k=4$, $j=1$ and $r=3$.
In this case,
$\Delta_3= 7-
v_2 \left( 
{{a+1} \choose {a-1}}
2^{2}
\right)
=6-
v_2 \big( 
a(a+1)
\big)
$
and
$\Delta_4=
v_2 \left( 
{{a+ 3} \choose {a-1}}
2^{4}
\right)
-
v_2 \left( 
{{a+1} \choose {a-1}}
2^{2}
\right) 
=
v_2 \big( 
(a+2)(a+3) 
\big)
$.
So if $v_2 \big( 
a(a+1)
\big) \leq 5$,
then $\Delta_3 >0$ and
$\Delta_4 >0$,
and therefore,
$v_2(B_n(4,1))
=
2+
v_2 \left( 
{{a+ 1} \choose {a-1}}
\right)
=
1+
v_2 \big( 
a(a+1)
\big)
.
$

When $n \equiv 4 \pmod{5}$,
it follows from Theorem~\ref{Bkj:theorem4}(i)
that $v_2(B_n(4,1)) = 0$.
\end{proof}

\subsection{k=4, j=2}

\begin{theorem}
\label{Bkj:theoremk4j2}
For any $n \geq 0$,
$$
v_2(B_n(4,2))
=
\begin{cases}
1+v_2(a), 
& \text{if } 
n=5a
\text{ and } 
v_2(a) \leq 5,
\\
1+ v_2 \big( a(a+1) \big), 
& \text{if } 
n = 5a+1
\text{ and }
v_2 \big( a(a+1) \big) \leq 5,
\\
0, 
& \text{if } 
n \equiv 2 \pmod{5},
\\
1 + v_2(a), 
& \text{if } 
n=5a+3
\text{ and } 
v_2(a) \leq 4,
\\
0, 
& \text{if } 
n \equiv 4 \pmod{5}.
  \end{cases}
$$
\end{theorem}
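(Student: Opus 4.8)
The plan is to fix $k=4$ and $j=2$ throughout, write $n = 5a + r$ with $-1 \le r \le 3$, and treat each of the five lines of the statement as a separate case indexed by the residue $r \in \{0,1,2,3,-1\}$. In every case the value of $v_2(B_n(4,2))$ will be read off directly from the general results already established---Theorem~\ref{Bkj:theorem4}, Theorem~\ref{Bkj:theorem5}, and Lemma~\ref{Bkj:lemma5}---specialized to $k=4$, $j=2$; the smallest relevant values of $a$ are already handled inside the proofs of those results via Lemma~\ref{Bkj:lemma4}.

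Case $r=0$, i.e.\ $n=5a$: since $j=2$ satisfies $1 \le j \le k-2$, Theorem~\ref{Bkj:theorem5}(i) gives $v_2(B_n(4,2))=1$ when $a$ is odd, while Theorem~\ref{Bkj:theorem5}(ii) gives $v_2(B_n(4,2))=1+v_2(a)$ when $a$ is even with $v_2(a)\le k=4$; the remaining even subcase $v_2(a)=5=k+1$ is covered by Lemma~\ref{Bkj:lemma5} (applicable since $1\le j<k-1$), which yields $v_2(B_n(4,2))=k+2=6=1+v_2(a)$. Together these give $v_2(B_n(4,2))=1+v_2(a)$ whenever $v_2(a)\le 5$. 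Case $r=1$, i.e.\ $n=5a+1$: because $j=2\ge 2$, Theorem~\ref{Bkj:theorem5}(iii) treats $a$ odd (giving $1+v_2(a+1)$ when $v_2(a+1)\le k+1=5$) and Theorem~\ref{Bkj:theorem5}(iv) treats $a$ even (giving $1+v_2(a)$ when $v_2(a)\le 5$); since exactly one of $a,a+1$ is even, these recombine into the single statement $v_2(B_n(4,2))=1+v_2\big(a(a+1)\big)$ under the hypothesis $v_2\big(a(a+1)\big)\le 5$.

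Case $r=2$ is the case $r=j$, so Theorem~\ref{Bkj:theorem4}(i) gives $v_2(B_n(4,2))=0$, which is the line $n\equiv 2\pmod 5$. Case $r=3=j+1\le k-1$ with $j\ge 1$ is precisely Theorem~\ref{Bkj:theorem5}(vi), giving $v_2(B_n(4,2))=1+v_2(a)$ when $v_2(a)\le k=4$, i.e.\ the line $n=5a+3$. Finally $r=-1$ is the line $n\equiv 4\pmod 5$, where Theorem~\ref{Bkj:theorem4}(i) again gives $v_2(B_n(4,2))=0$.

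The statement is thus a straightforward corollary of the general machinery, and I do not expect a real obstacle. The only bookkeeping that needs care is in the case $r=1$: one must confirm that the odd/even split reassembles exactly into the clean hypothesis $v_2\big(a(a+1)\big)\le 5$ and conclusion $1+v_2\big(a(a+1)\big)$, using $\min(v_2(a),v_2(a+1))=0$. One should also check that the small-$a$ boundary values agree with the stated formulas and that the degenerate indices with $B_n(4,2)=0$ (namely $n=0,1,3$) are exactly those for which the pertinent $v_2(\cdot)\le\cdot$ side condition fails, hence are legitimately left uncovered.
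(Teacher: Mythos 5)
Your proposal is correct and follows essentially the same route as the paper: case split on $r=n\bmod 5$, applying Theorem~\ref{Bkj:theorem4}(i) for $r\in\{2,-1\}$, Theorem~\ref{Bkj:theorem5}(i)/(ii) plus Lemma~\ref{Bkj:lemma5} for $r=0$, Theorem~\ref{Bkj:theorem5}(iii)/(iv) for $r=1$, and Theorem~\ref{Bkj:theorem5}(vi) for $r=3$. (The paper's text cites part (iv) for the $n=5a+3$ case, but that is evidently a typo for (vi), which is the part you correctly invoke.)
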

\begin{proof}
When $n=5a$ and $a$ is odd,
it follows from Theorem~\ref{Bkj:theorem5}(i)
with $k=4$, $j=2$ and $r=0$
that $v_2(B_n(4,2))=1$.

When $n=5a$ and $a$ is even,
it follows from Theorem~\ref{Bkj:theorem5}(ii)
with $k=4$, $j=2$ and $r=0$
that if $v_2(a) \leq 4$, then $v_2(B_n(4,2))=1+v_2(a)$.

When $n=5a$ and $v_2(a) =5$,
it follows from 
Lemma~\ref{Bkj:lemma5} that $v_2(B_n(4,2))=6$.

When $n = 5a+1$
and $a$ is odd,
it follows from Theorem~\ref{Bkj:theorem5}(iii)
with $k=4$, $j=2$ and $r=1$
that if $v_2(a+1) \leq 5$,
then
$v_2(B_n(4,2))=1+v_2(a+1)$.

When $n = 5a+1$
and $a$ is even,
it follows from Theorem~\ref{Bkj:theorem5}(iv)
with $k=4$, $j=2$ and $r=1$
that if $v_2(a) \leq 5$,
then
$v_2(B_n(4,2))=1+v_2(a)$.

When $n \equiv 2 \pmod{5}$,
it follows from Theorem~\ref{Bkj:theorem4}(i)
that $v_2(B_n(4,2)) = 0$.

When $n=5a+3$,
it follows from 
Theorem~\ref{Bkj:theorem5}(iv)
with $k=4$, $j=2$ and $r=3$
that if $v_2(a) \leq 4$, then
$v_2(B_n(4,2))=1 + v_2(a)$.

When $n \equiv 4 \pmod{5}$,
it follows from Theorem~\ref{Bkj:theorem4}(i)
that $v_2(B_n(4,2)) = 0$.
\end{proof}

\subsection{k=4, j=3}

\begin{theorem}
\label{Bkj:theoremk4j3}
For any $n \geq 0$,
$$
v_2(B_n(4,3))
=
\begin{cases}
1 + 2 v_2(a), 
& \text{if } 
n=5a
\text{ and }
v_2(a) \leq 2,
\\
1 + v_2 \big( a(a+1) \big), 
& \text{if } 
n = 5a+1
\text{ and } 
v_2 \big( a(a+1) \big) \leq 5,
\\
3, 
& \text{if } 
n=5a + 2
\text{ and }
v_2(a-1) \geq 2,
\\
0, 
& \text{if } 
n \equiv 3 \pmod{5},
\\
0, 
& \text{if } 
n \equiv 4 \pmod{5}.
  \end{cases}
$$
\end{theorem}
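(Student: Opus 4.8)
The plan is to write $n=5a+r$ with $r\in\{-1,0,1,2,3\}$ (so here $k=4$ and $j=3=k-1$) and to dispose of the five residue classes of $n$ modulo $5$ one at a time, in the same spirit as the proofs of Theorems~\ref{Bkj:theoremk4j0}--\ref{Bkj:theoremk4j2}. Two of the five lines are immediate: when $n\equiv 4\pmod 5$ (i.e.\ $r=-1$) and when $n\equiv 3\pmod 5$ (i.e.\ $r=3=j$), Theorem~\ref{Bkj:theorem4}(i) gives $v_2(B_n(4,3))=0$ at once.

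For $n=5a$ (the $r=0$ line) I would split on the parity of $a$. Since $j=k-1$, Theorem~\ref{Bkj:theorem5}(ii) does not apply; instead, for $a$ odd Theorem~\ref{Bkj:theorem5}(i) gives $v_2(B_n(4,3))=1=1+2v_2(a)$ (the base case $a=1$, $n=5$, coming from Lemma~\ref{Bkj:lemma4}(i)), and for $a$ even with $v_2(a)\le 2=k/2$ Lemma~\ref{Bkj:lemma6} gives $v_2(B_n(4,3))=1+2v_2(a)$ directly. For $n=5a+1$ (the $r=1$ line, where $j=3\ge 2$) I would again split on parity: for $a$ odd we have $v_2(a(a+1))=v_2(a+1)$ and Theorem~\ref{Bkj:theorem5}(iii) applies whenever $v_2(a+1)\le k+1=5$ (base case $a=1$, $n=6$, from Lemma~\ref{Bkj:lemma4}(i)), while for $a$ even we have $v_2(a(a+1))=v_2(a)$ and Theorem~\ref{Bkj:theorem5}(iv) applies whenever $v_2(a)\le 5$; in either case this yields $v_2(B_n(4,3))=1+v_2(a(a+1))$ under the stated hypothesis $v_2(a(a+1))\le 5$.

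The one line that requires real work is $n=5a+2$ (the $r=2$ line, noting $r=2=j-1$) under the hypothesis $v_2(a-1)\ge 2$, i.e.\ $a\equiv 1\pmod 4$. For $a=1$ one has $n=7=k+j$, and Lemma~\ref{Bkj:lemma4}(i) gives $B_7(4,3)=2^{3}$, hence $v_2=3$. For $a\ge 5$ I would invoke Theorem~\ref{Bkj:theorem4}(ii) with $k=4$, $j=3$, $r=2$. The arithmetic core is that $a\equiv 1\pmod 4$ forces $a$ and $a+2$ odd and $a+1\equiv 2\pmod 4$, so $v_2\left(a(a+1)(a+2)\right)=1$ and $v_2\left((a-1)a(a+1)(a+2)\right)=1+v_2(a-1)$; writing ${{a+2}\choose{a-1}}={{a+2}\choose 3}$ and ${{a+2}\choose{a-2}}={{a+2}\choose 4}$ and applying Lemma~\ref{Bkj:lemma1}(ii) (equivalently, counting powers of $2$ in numerator and denominator) then gives $v_2\left({{a+2}\choose{a-1}}\right)=0$ and $v_2\left({{a+2}\choose{a-2}}\right)=v_2(a-1)-2\ge 0$. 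Substituting into Definition~\ref{Bkj:def2}(i) yields $\Delta_1=(r+k+2)-v_2\left({{a+2}\choose{a-1}}2^{r+1}\right)=8-3=5>0$ and $\Delta_2=v_2(a-1)-1\ge 1>0$, so Theorem~\ref{Bkj:theorem4}(ii) applies and gives $v_2(B_n(4,3))=r+1+v_2\left({{a+2}\choose{a-1}}\right)=3+0=3$.

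I expect this $r=2$ computation — pinning down the $2$-adic orders of the two binomial coefficients exactly and checking that both $\Delta_1$ and $\Delta_2$ are strictly positive, so that Theorem~\ref{Bkj:theorem4}(ii) is genuinely applicable — to be the only nontrivial part; every other line of the theorem reduces to a direct citation of a result already proved.
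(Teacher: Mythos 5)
Your proposal is correct and follows essentially the same route as the paper: Theorem~\ref{Bkj:theorem4}(i) for $r\in\{-1,3\}$, Theorem~\ref{Bkj:theorem5}(i) and Lemma~\ref{Bkj:lemma6} for $r=0$, Theorem~\ref{Bkj:theorem5}(iii)/(iv) for $r=1$, and Theorem~\ref{Bkj:theorem4}(ii) with the same $\Delta_1=5$, $\Delta_2=v_2(a-1)-1$ computation for $r=2$. Your explicit treatment of the $a=1$ base case in the $r=2$ line (where Theorem~\ref{Bkj:theorem4}(ii) requires $a\ge 2$) is in fact slightly more careful than the paper's own proof.
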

\begin{proof}
When $n=5a$ and $a$ is odd,
it follows from Theorem~\ref{Bkj:theorem5}(i)
with $k=5$, $j=3$ and $r=0$
that $v_2(B_n(4,3))=1$.

When $n=5a$ and $a$ is even,
it follows from Lemma~\ref{Bkj:lemma6}
that if $v_2(a) \leq 2$, then 
$v_2(B_n(4,3))=1 + 2 v_2(a)$.

When $n = 5a+1$
and $a$ is odd,
it follows from Theorem~\ref{Bkj:theorem5}(iii)
with $k=4$, $j=3$ and $r=1$
that if $v_2(a+1) \leq 5$,
then
$v_2(B_n(4,3))=1+v_2(a+1)$.

When $n = 5a+1$
and $a$ is even,
it follows from Theorem~\ref{Bkj:theorem5}(iv)
with $k=4$, $j=3$ and $r=1$
that if $v_2(a) \leq 5$,
then
$v_2(B_n(4,3))=1+v_2(a)$.

When $n=5a + 2$,
apply Theorem~\ref{Bkj:theorem4}(ii)
with $k=4$, $j=3$ and $r=2$.
In this case,
$\Delta_1 = 8 - 
v_2 \left(
{{a+2} \choose {a-1}}
2^{3}
\right)
=
6 - 
v_2 \big(
a(a+1)(a+2)
\big)
$
and
$\Delta_2 = 
v_2 \left(
{{a + 2} \choose {a-2}} 2^{4}
\right)
-
v_2 \left(
{{a+2} \choose {a-1}}
2^{3}
\right)
= 
v_2(a-1) - 1$.
So if $v_2(a-1) \geq 2$,
then $\Delta_2 >0$ and
$\Delta_1 = 6 - v_2(a+1)=5>0$,
and therefore,
$
v_2(B_n(4,3))
=
3+
v_2 \left( 
{{a + 2} \choose {a-1}}
\right)
=
2+
v_2 \big( 
a(a+1)(a+2)
\big)
=
2+
v_2(a+1)
=3
$.

When $n \equiv 3 \pmod{5}$,
it follows from Theorem~\ref{Bkj:theorem4}(i)
that $v_2(B_n(4,3)) = 0$.

When $n \equiv 4 \pmod{5}$,
it follows from Theorem~\ref{Bkj:theorem4}(i)
that $v_2(B_n(4,3)) = 0$.
\end{proof}


\begin{thebibliography}{88} 


\bibitem{Bunder2020}
M. Bunder and J. Tonien,
Generalized Fibonacci numbers and their
2-adic order,
{\it Integers}
{\bf 20}(2020), \#A105.

    
\bibitem{Halton1966}
J. H. Halton, On the divisibility properties of Fibonacci numbers, 
{\it Fibonacci Quart.}
{\bf 4}
(1966), 217-240.

\bibitem{Lengyel1995}
T. Lengyel, 
The order of the Fibonacci and Lucas numbers, 
{\it Fibonacci Quart.}
{\bf 33} (1995),
234-239.

\bibitem{Lengyel2003}
T. Lengyel, 
Divisibility properties by multisection, 
{\it Fibonacci Quart.}
{\bf 41} (2003), 72-79.

\bibitem{Lengyel_Marques}
T. Lengyel and D. Marques,
The 2-adic order of some generalized Fibonacci numbers,
{\it Integers}
{\bf 17}(2017), \#A5. 

\bibitem{Lengyel_Marques_2014}
D. Marques and T. Lengyel,
The 2-adic order of the Tribonacci numbers 
and the equation $T_n = m!$,
{\it J. Integer Seq.} {\bf 17}(2014), 
Article 14.10.1.

\bibitem{Robinson1963}
D. W. Robinson, 
The Fibonacci matrix modulo m, 
{\it Fibonacci Quart.}
{\bf 1} (1963), 29-36.

\bibitem{Sobolewski}
B. Sobolewski, The 2-adic valuation of generalized Fibonacci sequences with an application to certain Diophantine equations, 
{\it J. Number Theory} {\bf 180} (2017), 730-742.

\bibitem{Vinson1963}
J. Vinson, 
The relation of the period modulo m to the rank of apparition of m in the
Fibonacci sequence, 
{\it Fibonacci Quart.}
{\bf 1}
(1963), 37-45.

\bibitem{Young}
P. T. Young, 2-adic valuations of generalized Fibonacci numbers of odd order, 
{\it Integers}
{\bf 18} (2018), \#A1.

\end{thebibliography}
\end{document}